\theoremstyle{plain}
\newtheorem{thm}{Theorem}[section]
\newtheorem{defn}[thm]{Definition}
\newtheorem{lem}[thm]{Lemma}
\newtheorem{prop}[thm]{Proposition}
\newtheorem{cor}[thm]{Corollary}
\newtheorem{rem}[thm]{Remark}
\title[Convolution structures on Orlicz spaces]{Convolution structures for an Orlicz space with respect to vector measures on a compact group}
\author{Manoj Kumar} 
\address{Department of Mathematics \endgraf Indian Institute of Technology Delhi \endgraf Delhi - 110016 \endgraf India}
\email{manojk9t3@gmail.com}
\author{N. Shravan Kumar} 
\address{Department of Mathematics \endgraf Indian Institute of Technology Delhi \endgraf Delhi - 110016 \endgraf India}
\email{shravankumar@maths.iitd.ac.in}
\begin{document}
\begin{abstract}
The aim of this paper is to present some results about the space $L^\Phi(\nu),$ where $\nu$ is a vector measure on a compact (not necessarily abelian) group and $\Phi$ is a Young function. We show that under certain conditions, the space $L^\Phi(\nu)$ becomes an $L^1(G)$-module with respect to the usual convolution of functions. We also define one more convolution structure on $L^\Phi(\nu).$
\end{abstract}

\keywords{Compact group, Orlicz Space, vector measure, convolution}

\subjclass[2010]{Primary 43A77, 43A15; Secondary 46G10}
	
\maketitle

\section{Introduction}
It is well known that, if $G$ is a compact abelian group, then for any $1\leq p\leq \infty,$ the space $L^p(G)$ forms an $L^1(G)$-module. See \cite{F}. In 2009, O. Delgado and P. Miana \cite{DM} generalised this result to an $L^p$ space associated with a vector measure. This note has the modest aim of showing how some of these results can be carried over, not always trivially, to spaces associated to a vector measure on a compact group that is not necessarily abelian. More precisely, we show that an Orlicz space associated with a vector measure on a compact group is an $L^1(G)$-module. 

In the classical case, as is well-known, the Haar measure plays a major role in proving many facts about the $L^p$ spaces. Therefore, in order to prove that an Orlicz space associated with a vector measure is an $L^1(G)$-module, it becomes necessary to assume certain kind of translation invariance of the vector measure.

In Section 3, we show that an Orlicz space associated with a norm integral translation invariant vector measure is a homogeneous space. In Section 4, our main aim is to show that an Orlicz space associated with a vector measure is an $L^1(G)$-module. We also show that the Haar measure is a Rybakov control measure for an absolutely continuous norm integral translation invariant vector measure with some density. In the abelian case, one of the ingredients for the proof of this result is the classical Markov-Kakutani fixed point theorem. This doesn't work if the  group is not abelian. We would like to mention here that the proof given in this paper depends on a fixed point theorem provided in \cite{K}, which works for any compact group.

One of the classical results of abstract harmonic analysis is that an approximate identity in $L^1(G)$ also serves as an approximate identity for the $L^p(G)$ spaces also. Theorem 4.5 of this paper, generalizes this result to the case of an Orlicz space associated with a vector measure.

Finally, in Section 5 we consider another convolution product. This product was introduced in \cite{CFNP} for the $L^p$ spaces associated with a vector measure. In this section, we extend this convolution product to an Orlicz space associated with a vector measure.

Throughout this paper, $G$ will denote a compact group with a fixed normalized Haar measure $m_G.$ 

\section{Vector measures and their associated Orlicz spaces}
Let $G$ be a compact group, $\mathcal{B}(G)$ be the Borel $\sigma$-algebra on $G$ and let $m_G$ be the normalized Haar measure on $G.$ For $1\leq p\leq\infty,$ let $L^p(G)$ denote the usual $p^{th}$ Lebesgue space with respect to the measure $m_G.$ Let $\mathcal{S}(G)$ denote the space of all simple functions on $G$ and $\mathcal{C}(G)$ denote the space of all continuous functions on $G.$

Let $X$ be a complex Banach space and let $\nu$ be a $\sigma$-additive $X$-valued vector measure on $G$. Let $X^\prime$ denote the topological dual of $X$ and let $B_{X^\prime}$ denote the closed unit ball in $X^\prime.$ For each $x^\prime\in X^\prime,$ we shall denote by $\langle\nu,x^\prime\rangle,$ the corresponding scalar valued measure for the vector measure $\nu,$ which is defined as $\langle\nu,x^\prime\rangle(A)=\langle\nu(A),x^\prime\rangle, A\in\mathcal{B}(G).$ A set $A\in\mathcal{B}(G)$ is said to be $\nu$-null if $\nu(B)=0$ for every Borel set $B\subset A.$ The vector measure $\nu$ is said to be {\it absolutely continuous} with respect to a non-negative scalar measure $\mu$ if $\underset{\mu(A)\rightarrow 0}{\lim}\nu(A)=0, A\in\mathcal{B}(G).$ We shall denote this as $\nu\ll\mu.$ The semivariation of $\nu$ on a set $A\in\mathcal{B}(G)$ is defined as $\|\nu\|(A)=\underset{x^\prime\in B_{X^\prime}}\sup |\langle\nu,x^\prime\rangle|(A).$ We shall denote by $\|\nu\|$ the quantity $\|\nu\|(G).$ Let $M_{ac}(G,X)$ denote the space of all $X$-valued measures which are absolutely continuous with respect to the Haar measure $m_G$. A finite positive measure $\mu$ is said to be a {\it Rybakov control measure} for a vector measure $\nu,$ if $\nu\ll\mu=|\langle\nu,x^\prime\rangle|$ for some $x^\prime\in X^\prime.$ It follows from \cite[Theorem IX.2.2]{DU} that such an $x^\prime$ always exists. Further, by \cite[Pg. 10, Theorem 1]{DU} and from the inequality $\mu(A)\leq\|x^\prime\|_{X^\prime}\|\nu\|(A),\ A\in\mathcal{B}(G),$ it follows that the measures $\nu$ and $\mu$ have same null-sets. For more details on vector measures see \cite{DU} and \cite{ORP}.

We say that a map $\Phi:[0,\infty)\rightarrow [0,\infty)$ is a Young function if $\Phi$ is a strictly increasing, convex and continuous such that $\Phi(t)=0$ if and only if $t=0$ and $\underset{t\rightarrow\infty}{\lim}\Phi(t)=\infty.$ The complementary Young function $\Psi$ of the function $\Phi$ is given by $\Psi(t)=\sup\{ts-\Phi(s):s\geq 0\},~t\geq 0.$ For a (finite) positive measure $\mu$ on $G$, let $L^\Phi(\mu)$ denote the Orlicz space, consisting of all complex-valued measurable functions on $G$ with $\|f\|_{L^\Phi(\mu)}<\infty,$ where $\|\cdot\|_{L^\Phi(\mu)}$ is the Luxemberg norm given by $$\|f\|_{L^\Phi(\mu)}=\inf\left\{k>0: \int_G\Phi\left(\frac{|f|}{k}\right)\,d\mu\leq 1\right\}.$$ Note that the Orlicz space is a natural generalisation of the classical $L^p$ spaces. For more on Orlicz spaces see \cite{RR}.

Let $L_w^\Phi(\nu)$ denote the weak Orlicz space with respect to a vector measure $\nu,$ i.e., a space consisting of all complex-valued measurable functions $f$ such that $\|f\|_{\nu,\Phi}<\infty,$ where $\|f\|_{\nu,\Phi}=\underset{x^\prime\in B_{X^\prime}}{\sup}\|f\|_{L^\Phi(|\langle\nu,x^\prime\rangle|)}.$ Let $L^\Phi(\nu)$ denote the closure of simple functions under the norm $\|\cdot\|_{\nu,\Phi}.$ The space $L^\Phi(\nu)$ is known as the Orlicz space with respect to the vector measure $\nu.$ Note that if $\Phi(t)=t,$ then the spaces $L^\Phi(\nu)$ and $L^\Phi_w(\nu)$ coincides with $L^1(\nu)$ and $L^1_w(\nu)$ respectively. We shall denote by $\|\cdot\|_\nu$ the norm on the space $L^1_w(\nu).$ Further, for any Young function $\Phi,$ the spaces $L^\Phi_w(\nu)$ and $L^\Phi(\nu)$ are continuously embedded in $L^1_w(\nu)$ and $L^1(\nu)$ respectively. Moreover, if $f\in L^\Phi_w(\nu),$ then, by \cite[Proposition 4.2]{FF}, $\|f\|_\nu\leq2\|\chi_G\|_{\nu,\Psi}\|f\|_{\nu,\Phi},$ i.e., $L^\Phi_w(\nu)$ is continuously embedded in $L^1(\nu).$ For more details see \cite{D}. 

From now onwards, $X$ will denote a Banach space and $\nu$ an $X$-valued measure on $G.$ Further, $\Phi$ will denote a Young function with $\Psi$ as its complementary Young function.

\section{Homogeneity of the space $L^\Phi(\nu)$}
The main aim of this section is to show that the space $L^\Phi(\nu)$ is a homogeneous space. We begin this section with the definition of a homogeneous space.

A Banach function space $Z$ is said to be a {\it homogeneous space} if for each $f\in Z$ and $t,s\in G,$ we have that
\begin{enumerate}[(i)]
\item $\tau_sf\in Z,$ where $\tau_sf(t)=f(s^{-1}t),$
\item $\|\tau_sf\|_Z=\|f\|_Z$ and 
\item $s\mapsto\tau_sf$ is continuous from $G$ into $Z.$
\end{enumerate}  

Let $h:G\rightarrow G$ be a homeomorphism. For a measurable function $f:G\rightarrow\mathbb{C}$, define a function $f_h:G\rightarrow\mathbb{C}$ by $f_h=f\circ h^{-1}.$ Note that $f_h$ is also a measurable function. Similarly, define a measure $\nu_h$ as $\nu_h(A)=\nu(h(A)),~A\in\mathcal{B}(G).$

We denote by $I_\nu$ the continuous linear operator $I_\nu:L^1(\nu)\rightarrow X$ given by $I_\nu(f)=\int_Gf\,d\nu,\ f\in L^1(\nu)$. See \cite[Pg. 152]{ORP}.
\begin{defn}\mbox{ }
\begin{enumerate}[(i)]
\item A vector measure $\nu$ is said to be a norm integral $h$-invariant vector measure if $\|I_\nu(\phi_h)\|=\|I_\nu(\phi)\|$ $\forall\ \phi\in\mathcal{S}(G).$
\item A Banach function space $Z$ is said to be norm $h$-invariant if for each $f\in Z,$ we have $f_h\in Z$ and $\|f_h\|_Z=\|f\|_Z.$
\end{enumerate}
\end{defn}
Observe that a vector measure $\nu$ is norm integral $h$-invariant if and only if $\nu$ is norm integral $h^{-1}$-invariant. 

Now we prove a lemma which will help us to prove the norm $h$-invariance of (weak) Orlicz spaces with respect to a norm integral $h$-invariant vector measure.
\begin{lem}\label{dual}Let $\nu$ be a norm integral $h$-invariant vector measure and $x^\prime\in X^\prime$. Then there exists $x_h^\prime\in X^\prime$ such that $\|x_h^\prime\|\leq\|x^\prime\|.$ Further, $\langle\nu_h,x^\prime\rangle(A)=\langle\nu,x_h^\prime\rangle(A),~A
\in\mathcal{B}(G).$
\end{lem}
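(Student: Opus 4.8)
The plan is to realize $x_h^\prime$ as a Hahn--Banach extension of a functional defined on the range of $I_\nu$ over simple functions, with the norm integral $h$-invariance being exactly what guarantees that this functional is well defined and bounded by $\|x^\prime\|$.

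First I would record the elementary identity linking $\nu_h$ to the transform $\phi\mapsto\phi_h$ on simple functions. Since $h$ is a homeomorphism, $h(A)$ is Borel for every $A\in\mathcal{B}(G)$ and $\phi_h$ is again simple, so all the expressions below make sense. For $A\in\mathcal{B}(G)$ one has $(\chi_A)_h=\chi_A\circ h^{-1}=\chi_{h(A)}$, whence $I_\nu\big((\chi_A)_h\big)=\nu(h(A))=\nu_h(A)$; by linearity this gives $I_\nu(\phi_h)=I_{\nu_h}(\phi)$ for every $\phi\in\mathcal{S}(G)$. Consequently the hypothesis reads $\|I_{\nu_h}(\phi)\|=\|I_\nu(\phi_h)\|=\|I_\nu(\phi)\|$ for all $\phi\in\mathcal{S}(G)$, which is the form in which I will use it.

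Next I would define a linear functional $\lambda$ on the subspace $I_\nu(\mathcal{S}(G))\subseteq X$ by setting $\lambda\big(I_\nu(\phi)\big)=\langle I_\nu(\phi_h),x^\prime\rangle$. The step that needs care, and which I expect to be the main obstacle, is \emph{well-definedness}: if $I_\nu(\phi)=I_\nu(\psi)$ for simple $\phi,\psi$, then $I_\nu(\phi-\psi)=0$, and the norm invariance forces $\|I_\nu\big((\phi-\psi)_h\big)\|=\|I_\nu(\phi-\psi)\|=0$, so $I_\nu(\phi_h)=I_\nu(\psi_h)$ and the two prescribed values of $\lambda$ agree. This is precisely where the hypothesis enters in an essential way; without it there is no reason the definition should be consistent. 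Boundedness then comes for free from the same invariance, since $|\lambda\big(I_\nu(\phi)\big)|\leq\|x^\prime\|\,\|I_\nu(\phi_h)\|=\|x^\prime\|\,\|I_\nu(\phi)\|$, so $\lambda$ has norm at most $\|x^\prime\|$ on $I_\nu(\mathcal{S}(G))$.

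Finally I would apply the Hahn--Banach theorem to extend $\lambda$ to a functional $x_h^\prime\in X^\prime$ with $\|x_h^\prime\|=\|\lambda\|\leq\|x^\prime\|$, giving the norm estimate. The claimed measure identity then follows by evaluating on characteristic functions: for $A\in\mathcal{B}(G)$,
\[
\langle\nu,x_h^\prime\rangle(A)=\langle\nu(A),x_h^\prime\rangle=\lambda\big(I_\nu(\chi_A)\big)=\langle I_\nu\big((\chi_A)_h\big),x^\prime\rangle=\langle\nu_h(A),x^\prime\rangle=\langle\nu_h,x^\prime\rangle(A),
\]
which is exactly the asserted relation. The only genuinely nontrivial ingredient throughout is the passage from $I_\nu(\phi-\psi)=0$ to $I_\nu\big((\phi-\psi)_h\big)=0$, everything else being bookkeeping with simple functions and a standard extension argument.
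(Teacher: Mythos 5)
Your proof is correct and follows essentially the same route as the paper: the paper defines the map $T_h\big(I_\nu(\phi)\big)=I_\nu(\phi_h)$ on $I_\nu(\mathcal{S}(G))$, notes it is an isometry by norm integral $h$-invariance, sets $y_h^\prime=x^\prime\circ T_h$ (your functional $\lambda$), extends it by Hahn--Banach to get $x_h^\prime$, and verifies the measure identity on characteristic functions exactly as you do. Your explicit check of well-definedness is a detail the paper leaves implicit in asserting that $T_h$ is a (well-defined) isometry, so your write-up is if anything slightly more careful on the one point that genuinely uses the hypothesis.
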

\begin{proof}
Define a map $T_h:I_\nu(\mathcal{S}(G))\subset X\rightarrow I_\nu(\mathcal{S}(G))$ by $T_h(I_\nu(\phi))=I_\nu(\phi_h).$ Since $\nu$ is norm integral $h$-invariant, it follows that the map $T_h$ is an isometry. Now, for $x^\prime\in X^\prime,$ let $y_h^\prime=x^\prime\circ T_h.$ Then $y_h^\prime\in (I_\nu(\mathcal{S}(G)))^\prime$ and $\|y_h^\prime\|\leq\|x^\prime\|.$ Further, by Hahn-Banach extension theorem, there exists $x_h^\prime\in X^\prime$ such that $x_h^\prime=y_h^\prime$ on $I_\nu(\mathcal{S}(G))$ and $\|x_h^\prime\|=\|y_h^\prime\|\leq\|x^\prime\|.$ 

Now, let $A\in\mathcal{B}(G).$ Then, we have, 
\begin{align*}
\langle\nu_h,x^\prime\rangle(A)=&\langle\nu_h(A),x^\prime\rangle = \langle I_\nu((\chi_A)_h),x^\prime\rangle\\ =&\langle T_h(I_\nu((\chi_A)),x^\prime\rangle=\langle I_\nu((\chi_A),x^\prime\circ T_h\rangle \\ =&\langle\nu(A),y_h^\prime\rangle=\langle\nu,x_h^\prime
\rangle(A).\qedhere
\end{align*}
\end{proof}
We now prove that the spaces $L^\Phi_w(\nu)$ and $L^\Phi(\nu)$ are norm $h$-invariant for a norm integral $h$-invariant vector measure $\nu$.
\begin{thm}\label{2T1}Let $\nu$ be a norm integral $h$-invariant vector measure. Then the spaces $L^\Phi_w(\nu)$ and $L^\Phi(\nu)$ are norm $h$-invariant.
\end{thm}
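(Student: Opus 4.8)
The plan is to reduce everything to the scalar Luxemburg norms appearing in the definition of $\|\cdot\|_{\nu,\Phi}$ and to exploit Lemma \ref{dual}. The first ingredient I would establish is the change-of-variables identity $\int_G g_h\,d\mu=\int_G g\,d\mu_h$ for every positive measure $\mu$ and every $\mu_h$-integrable $g$; this follows by checking it on characteristic functions, where $(\chi_A)_h=\chi_{h(A)}$ gives $\int_G(\chi_A)_h\,d\mu=\mu(h(A))=\mu_h(A)$, and then extending by linearity and the usual approximation argument. Applied with $g=\Phi(|f|/k)$ and using $|f_h|=(|f|)_h$, this yields
\[
\int_G\Phi\!\left(\frac{|f_h|}{k}\right)d\mu=\int_G\Phi\!\left(\frac{|f|}{k}\right)d\mu_h
\]
for each $k>0$, so the two families of admissible $k$ in the Luxemburg infima coincide, and hence $\|f_h\|_{L^\Phi(\mu)}=\|f\|_{L^\Phi(\mu_h)}$.

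Next I would identify the transformed measure. Taking $\mu=|\langle\nu,x^\prime\rangle|$, I note that passing to total variation commutes with the bijection $h$, so $\mu_h(A)=|\langle\nu,x^\prime\rangle|(h(A))=|\langle\nu,x^\prime\rangle_h|(A)$; since $\langle\nu,x^\prime\rangle_h=\langle\nu_h,x^\prime\rangle$, Lemma \ref{dual} then gives $\mu_h=|\langle\nu_h,x^\prime\rangle|=|\langle\nu,x_h^\prime\rangle|$. Combining this with the previous step,
\[
\|f_h\|_{L^\Phi(|\langle\nu,x^\prime\rangle|)}=\|f\|_{L^\Phi(|\langle\nu,x_h^\prime\rangle|)}.
\]
Taking the supremum over $x^\prime\in B_{X^\prime}$ and using $\|x_h^\prime\|\leq\|x^\prime\|\leq1$ (so that $x_h^\prime$ again ranges inside $B_{X^\prime}$), I obtain $\|f_h\|_{\nu,\Phi}\leq\|f\|_{\nu,\Phi}$; in particular $f_h\in L^\Phi_w(\nu)$ whenever $f\in L^\Phi_w(\nu)$.

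For the reverse inequality I would invoke the observation in the text that $\nu$ is also norm integral $h^{-1}$-invariant. Applying the bound just proved with $h$ replaced by $h^{-1}$ and then with $f$ replaced by $f_h$, and using $(f_h)_{h^{-1}}=f$, gives $\|f\|_{\nu,\Phi}\leq\|f_h\|_{\nu,\Phi}$. Hence $\|f_h\|_{\nu,\Phi}=\|f\|_{\nu,\Phi}$, which is the norm $h$-invariance of $L^\Phi_w(\nu)$. Finally, for $L^\Phi(\nu)$ I would observe that $f\mapsto f_h$ sends a simple function $\sum_i c_i\chi_{A_i}$ to the simple function $\sum_i c_i\chi_{h(A_i)}$, so it is a surjective linear isometry of $\mathcal{S}(G)$ onto itself (with inverse $g\mapsto g_{h^{-1}}$); being an isometry for $\|\cdot\|_{\nu,\Phi}$, it maps the closure of $\mathcal{S}(G)$ onto itself, i.e.\ it maps $L^\Phi(\nu)$ isometrically onto $L^\Phi(\nu)$.

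The main obstacle, and the only place real care is needed, is the measure-theoretic bookkeeping in the second paragraph: one must verify that passing to total variation genuinely commutes with the push-forward by $h$ (which uses that $h$ is a bijection, so that measurable partitions of $A$ correspond exactly to measurable partitions of $h(A)$), and then correctly chain this with the identity $\langle\nu_h,x^\prime\rangle=\langle\nu,x_h^\prime\rangle$ supplied by Lemma \ref{dual}. Once these identities are in place, the passage through the Luxemburg norm and the resulting two-sided estimate are routine.
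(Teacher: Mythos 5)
Your proposal is correct and follows essentially the same route as the paper: Lemma \ref{dual} to identify $|\langle\nu_h,x^\prime\rangle|$ with $|\langle\nu,x_h^\prime\rangle|$, a change-of-variables computation through the Luxemburg norm (the paper does this on simple functions plus monotone convergence, which is the same approximation you invoke), the supremum over $B_{X^\prime}$ using $\|x_h^\prime\|\leq\|x^\prime\|$, the $h^{-1}$ trick for the reverse inequality, and density of simple functions for $L^\Phi(\nu)$. If anything, you make explicit a point the paper's computation uses silently, namely that total variation commutes with the push-forward by the bijection $h$.
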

\begin{proof}
Since $\nu$ is a norm integral $h$-invariant vector measure, for any $x^\prime\in X^\prime,$ by Lemma \ref{dual}, there exists $x_h^\prime\in X^\prime$ such that $\|x_h^\prime\|\leq\|x^\prime\|$ and $\langle\nu_h,x^\prime\rangle(A)=\langle\nu,x_h^\prime\rangle(A),\forall\ A
\in\mathcal{B}(G).$ We now claim that $\|f_h\|_{L^\Phi(|\langle\nu,x^\prime\rangle|)}=\|f\|_{L^\Phi(|\langle\nu,x_h^\prime\rangle|)},\ \forall\ f\in L^\Phi_w(\nu).$ Let $f=\sum_{i=1}^n\alpha_i\chi_{A_i}\in\mathcal{S}(G)$. Then, for $k>0,$ using Lemma \ref{dual} and the fact that the sets $A_i$'s are disjoint, we have 
\begin{align*}
\int_G\Phi\left(\frac{|f_h|}{k}\right)\,d|\langle\nu,x^\prime\rangle|=& \int_G\Phi\left(\sum_{i=1}^n\frac{|\alpha_i|}{k}\chi_{h(A_i)}\right)\,d|\langle\nu,x^\prime\rangle|\\=&\sum_{i=1}^n\Phi\left(\frac{|\alpha_i|}{k}\right)|\langle\nu,x^\prime\rangle|(h(A_i))\\=&\sum_{i=1}^n\Phi\left(\frac{|\alpha_i|}{k}\right)|\langle\nu,x_h^\prime\rangle|(A_i)\\=&\int_G\Phi\left(\frac{|f|}{k}\right)\,d|\langle\nu,x_h^\prime\rangle|.
\end{align*}
Now, the claim follows from monotone convergence theorem. 

Let $f\in L^\Phi_w(\nu).$ Then, using Lemma \ref{dual}, we have,
\begin{align*}
\|f_h\|_{\nu,\Phi} =&\underset{x^\prime\in B_{X^\prime}}{\sup}\|f_h\|_{L^\Phi(|\langle\nu,x^\prime\rangle|)} \\ =&\underset{x^\prime\in B_{X^\prime}}{\sup}\|f\|_{L^\Phi(|\langle\nu,x_h^\prime\rangle|)} \\ \leq &\underset{x_h^\prime\in B_{X^\prime}}{\sup}\|f\|_{L^\Phi(|\langle\nu,x_h^\prime\rangle|)}=\|f\|_{\nu,\Phi}.
\end{align*} 
Note that if $\nu$ is a norm integral $h$-invariant measure, then $\nu$ is a norm integral $h^{-1}$-invariant measure and therefore $\|f\|_{\nu,\Phi}=\|(f_h)_{h^{-1}}\|_{\nu,\Phi}\leq\|f_h\|_{\nu,\Phi}.$ Hence the space $L^\Phi_w(\nu)$ is norm $h$-invariant.

Now let $f\in L^\Phi(\nu),$ then there exists a sequence $(\phi_n)$ of simple functions such that $\phi_n\rightarrow f$ in $\|\cdot\|_{\nu,\Phi}$ norm. Since $(\phi_n)_h\subset\mathcal{S}(G)$ and $$\|(\phi_n)_h-f_h\|_{\nu,\Phi}=\|(\phi_n-f)_h\|_{\nu,\Phi}=\|\phi_n-f\|_{\nu,\Phi},$$ implies that $f_h\in L^\Phi(\nu)$ as $L^\Phi(\nu)$ is the closure of simple functions. Hence it follows that $L^\Phi(\nu)$ is norm $h$-invariant.
\end{proof}
Next we show that the space of continuous functions on $G$ is dense in $L^\Phi(\nu).$ This will be used to prove homogeneity of the Orlicz space with respect to a norm integral translation invariant vector measure.
\begin{prop}\label{density}
Let $\nu\in M_{ac}(G,X).$ Then $\mathcal{C}(G)$ is dense in $L^\Phi(\nu).$
\end{prop}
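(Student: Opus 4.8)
The plan is to exploit the very definition of $L^\Phi(\nu)$ as the closure of $\mathcal{S}(G)$ under $\|\cdot\|_{\nu,\Phi}$: since the simple functions are dense, it suffices to show two things, namely that $\mathcal{C}(G)\subseteq L^\Phi(\nu)$ and that every characteristic function $\chi_A$, $A\in\mathcal{B}(G)$, lies in the closure of $\mathcal{C}(G)$. The first inclusion is immediate: any $f\in\mathcal{C}(G)$ is a uniform limit of simple functions, and since $\|g\|_{\nu,\Phi}\leq\|g\|_\infty\|\chi_G\|_{\nu,\Phi}$ with $\|\chi_G\|_{\nu,\Phi}<\infty$ (because $\|\nu\|(G)<\infty$), uniform convergence forces convergence in $\|\cdot\|_{\nu,\Phi}$, so $f\in\overline{\mathcal{S}(G)}=L^\Phi(\nu)$.

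First I would record the elementary formula for the norm of a characteristic function. Evaluating the Luxemburg norm on $\chi_B$ gives $\|\chi_B\|_{L^\Phi(|\langle\nu,x^\prime\rangle|)}=1/\Phi^{-1}\!\left(1/|\langle\nu,x^\prime\rangle|(B)\right)$, since the integral condition reduces to $\Phi(1/k)|\langle\nu,x^\prime\rangle|(B)\leq 1$. As $s\mapsto 1/\Phi^{-1}(1/s)$ is increasing and continuous, taking the supremum over $x^\prime\in B_{X^\prime}$ yields
\[
\|\chi_B\|_{\nu,\Phi}=\frac{1}{\Phi^{-1}\!\left(1/\|\nu\|(B)\right)}.
\]
Because $\Phi^{-1}$ is continuous with $\Phi^{-1}(t)\rightarrow\infty$ as $t\rightarrow\infty$, this shows $\|\chi_B\|_{\nu,\Phi}\rightarrow 0$ whenever $\|\nu\|(B)\rightarrow 0$.

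Next I would approximate $\chi_A$. Since $G$ is compact and $m_G$ is a regular Borel measure, given $\varepsilon>0$ there are a compact $K$ and an open $U$ with $K\subseteq A\subseteq U$ and $m_G(U\setminus K)<\varepsilon$. Urysohn's lemma then provides $g\in\mathcal{C}(G)$ with $\chi_K\leq g\leq\chi_U$, so that $|\chi_A-g|\leq\chi_{U\setminus K}$ pointwise. Monotonicity of the Luxemburg norm, and hence of $\|\cdot\|_{\nu,\Phi}$, gives $\|\chi_A-g\|_{\nu,\Phi}\leq\|\chi_{U\setminus K}\|_{\nu,\Phi}$, which by the displayed formula equals $1/\Phi^{-1}\!\left(1/\|\nu\|(U\setminus K)\right)$.

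The crux, and the step I expect to be the main obstacle, is passing from smallness of $m_G(U\setminus K)$ to smallness of $\|\nu\|(U\setminus K)$. This is precisely where the hypothesis $\nu\in M_{ac}(G,X)$ enters: since $\nu\ll m_G$ and $\nu$ is $\sigma$-additive, the $\varepsilon$-$\delta$ absolute continuity transfers to the semivariation, so that $\lim_{m_G(B)\rightarrow 0}\|\nu\|(B)=0$ (see \cite{DU}). Combining this with the previous paragraph gives $\|\chi_A-g\|_{\nu,\Phi}\rightarrow 0$ as $\varepsilon\rightarrow 0$, whence $\chi_A\in\overline{\mathcal{C}(G)}$. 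By linearity every simple function lies in $\overline{\mathcal{C}(G)}$, and therefore $L^\Phi(\nu)=\overline{\mathcal{S}(G)}\subseteq\overline{\mathcal{C}(G)}$, which together with $\mathcal{C}(G)\subseteq L^\Phi(\nu)$ establishes the density.
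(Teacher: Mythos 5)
Your proof is correct and follows essentially the same route as the paper: approximate simple functions by continuous ones, bound the error by the $\|\cdot\|_{\nu,\Phi}$-norm of the indicator of a set of small Haar measure (computed via $\Phi^{-1}$), and use $\nu\ll m_G$ to transfer smallness of $m_G(B)$ to smallness of the semivariation $\|\nu\|(B)$. The only real difference is cosmetic: the paper invokes Lusin's theorem for the whole simple function, whereas you reduce to characteristic functions and use regularity of $m_G$ plus Urysohn's lemma (in effect proving the special case of Lusin's theorem you need), and along the way you make explicit two points the paper leaves implicit, namely the inclusion $\mathcal{C}(G)\subseteq L^\Phi(\nu)$ and the fact that $\nu\ll m_G$ yields $\lim_{m_G(B)\to 0}\|\nu\|(B)=0$.
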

\begin{proof}
Let $\phi\in\mathcal{S}(G)$ and $\epsilon>0.$ For every $\delta>0,$ using Lusin's theorem, there exists $f\in \mathcal{C}(G)$ such that $\phi=f$ on $A^c$ and $\|f\|_\infty\leq\|\phi\|_\infty$ where $A^c$ is the complement of the set $A\in\mathcal{B}(G)$ with $m_G(A)<\delta.$ By \cite[p. 78, Corollary 7]{RR}, we have  
\begin{align*}
\|\phi-f\|_{\nu,\Phi}\leq&\|\phi-f\|_\infty\|\chi_A\|_{\nu,\Phi} \\ \leq & 2\|\phi\|_\infty\underset{x^\prime\in B_{X^\prime}}{\sup}\left[\Phi^{-1}\left(\frac{1}{|\langle\nu,x^\prime\rangle|(A)}\right)\right]^{-1}.
\end{align*} 
Since $\nu\in M_{ac}(G,X),$ by choosing $\delta$ small enough, we get $\|\phi-f\|_{\nu,\Phi}<\epsilon.$  Now the result follows from the density of $\mathcal{S}(G)$ in $L^\Phi(\nu).$
\end{proof}
\begin{rem}
Since the trigonometric polynomials on $G$ are dense in $\mathcal{C}(G)$ with respect to the uniform norm and it follows from Proposition \ref{density} that the trigonometric polynomials on $G$ are dense in $L^\Phi(\nu)$ for $\nu\in M_{ac}(G,X).$
\end{rem}
Now we conclude the section with the main result that an Orlicz space with respect to a norm integral translation invariant vector measure is homogeneous. 
\begin{thm}\label{homogeneity}
Let $\nu\in M_{ac}(G,X)$ be norm integral translation invariant. Then the space $L^\Phi(\nu)$ is homogeneous.
\end{thm}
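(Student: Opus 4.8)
The plan is to verify the three defining properties of a homogeneous space for $Z=L^\Phi(\nu)$. For each $s\in G$, let $h_s:G\rightarrow G$ denote left translation $h_s(t)=st$; this is a homeomorphism with $h_s^{-1}(t)=s^{-1}t$, so that $f_{h_s}=f\circ h_s^{-1}=\tau_sf$. Interpreting ``norm integral translation invariant'' as norm integral $h_s$-invariance for every $s\in G$, properties (i) and (ii) are immediate consequences of Theorem \ref{2T1}: applied to the homeomorphism $h_s$, it gives $\tau_sf=f_{h_s}\in L^\Phi(\nu)$ with $\|\tau_sf\|_{\nu,\Phi}=\|f\|_{\nu,\Phi}$ for every $f\in L^\Phi(\nu)$. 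Thus the substantive part of the theorem is the strong continuity (iii) of the map $s\mapsto\tau_sf$.

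For (iii), I would first establish continuity on the dense subspace $\mathcal{C}(G)$ (dense in $L^\Phi(\nu)$ by Proposition \ref{density}) and then transfer it to all of $L^\Phi(\nu)$. The key estimate is that, since $\nu$ is a finite vector measure, $\|\chi_G\|_{\nu,\Phi}<\infty$, and by \cite[p. 78, Corollary 7]{RR} one has the sup-norm domination $\|g\|_{\nu,\Phi}\leq\|g\|_\infty\|\chi_G\|_{\nu,\Phi}$ for every bounded measurable $g$. Now fix $g\in\mathcal{C}(G)$ and $s_0\in G$. Since $G$ is compact, $g$ is uniformly continuous, so $\|\tau_sg-\tau_{s_0}g\|_\infty\rightarrow 0$ as $s\rightarrow s_0$; combined with the above domination, $\|\tau_sg-\tau_{s_0}g\|_{\nu,\Phi}\leq\|\tau_sg-\tau_{s_0}g\|_\infty\|\chi_G\|_{\nu,\Phi}\rightarrow 0$, which yields continuity of $s\mapsto\tau_sg$ for continuous $g$.

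To pass to a general $f\in L^\Phi(\nu)$, I would use a standard $\epsilon/3$ argument built on the isometry (ii). Given $\epsilon>0$, choose $g\in\mathcal{C}(G)$ with $\|f-g\|_{\nu,\Phi}<\epsilon/3$. Writing $\tau_sf-\tau_{s_0}f=\tau_s(f-g)+(\tau_sg-\tau_{s_0}g)+\tau_{s_0}(g-f)$ and applying (ii) to the outer two (linear, isometric) terms, one obtains $\|\tau_sf-\tau_{s_0}f\|_{\nu,\Phi}\leq 2\|f-g\|_{\nu,\Phi}+\|\tau_sg-\tau_{s_0}g\|_{\nu,\Phi}<\tfrac{2\epsilon}{3}+\|\tau_sg-\tau_{s_0}g\|_{\nu,\Phi}$, and the last term is $<\epsilon/3$ once $s$ is close enough to $s_0$ by the previous paragraph. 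This establishes (iii), and hence homogeneity.

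The main obstacle is precisely property (iii): parts (i) and (ii) are handed to us directly by Theorem \ref{2T1}, but the strong continuity of translation is not automatic and must be bootstrapped from the dense, well-behaved class $\mathcal{C}(G)$. The two ingredients that make this work---the sup-norm domination $\|\cdot\|_{\nu,\Phi}\leq\|\cdot\|_\infty\|\chi_G\|_{\nu,\Phi}$ (which relies on $\|\chi_G\|_{\nu,\Phi}<\infty$) and the uniform operator bound supplied by the isometry (ii)---are exactly what allow the $\epsilon/3$ reduction to proceed uniformly in $s$.
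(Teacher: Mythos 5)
Your proof is correct and follows essentially the same route as the paper's: properties (i) and (ii) from Theorem \ref{2T1}, then an $\epsilon/3$ argument for (iii) using the density of $\mathcal{C}(G)$ (Proposition \ref{density}), the bound $\|\cdot\|_{\nu,\Phi}\leq\|\cdot\|_\infty\|\chi_G\|_{\nu,\Phi}$, and uniform continuity of continuous functions on the compact group $G$. The only cosmetic difference is that you state continuity at a fixed point $s_0$ while the paper phrases it via a neighbourhood $N$ of $e$ with $ts^{-1}\in N$; these are equivalent.
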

\begin{proof}
Let $f\in L^\Phi(\nu).$ By Theorem \ref{2T1}, $L^\Phi(\nu)$ is norm translation invariant and it is enough to show that the map $s\rightarrow\tau_sf$ is continuous from $G$ to $L^\Phi(\nu).$ Let $\epsilon>0.$ By Proposition \ref{density}, there exists $g\in \mathcal{C}(G)$ such that $\|f-g\|_{\nu,\Phi}<\epsilon/3.$ Further, for $t,s\in G,$ 
\begin{align*} \|\tau_tf-\tau_sf\|_{\nu,\Phi}\leq&\|\tau_tf-\tau_tg\|_{\nu,\Phi}+\|\tau_tg-\tau_sg\|_{\nu,\Phi}+\|\tau_sg-\tau_sf\|_{\nu,\Phi}\\\leq& 2\|f-g\|_{\nu,\Phi}+\|\tau_tg-\tau_sg\|_\infty\|\chi_G\|_{\nu,\Phi}\\<&\frac{2\epsilon}{3}+\|\tau_tg-\tau_sg\|_\infty\|\chi_G\|_{\nu,\Phi}.
\end{align*}
As $G$ is compact, $g$ is uniformly continuous and therefore there exists a neighbourhood $N$ of $e$ in $G$ such that $|g(t)-g(s)|<\frac{\epsilon}{3}\|\chi_G\|_{\nu,\Phi}^{-1},$ for every $t,s\in G$ such that $ts^{-1}\in N.$ Hence we have $\|\tau_tf-\tau_sf\|_{\nu,\Phi}<\epsilon$ for every $t,s\in G$ such that $ts^{-1}\in N.$
\end{proof}

\section{Convolution product for $L^\Phi_w(\nu)$ and $L^\Phi(\nu)$}
Throughout this section, $\nu$ will denote an absolutely continuous norm integral translation invariant vector measure. In this section, we first show that the Haar measure $m_G$ is a Rybakov control measure for the vector measure $\nu$ with some density. We will then show that $L^\Phi_w(\nu)$ and $L^\Phi(\nu)$ are Banach algebras under some conditions on $\nu.$

Note that by \cite[Pg. 108, Theorem 3.7]{ORP}, the space $L^1(\nu)$ is an order continuous Banach function space w.r.t. any Rybakov control measure for $\nu$. Hence by \cite[Pg. 35, Proposition 2.16]{ORP} the K\"othe dual $L^1(\nu)^*$ of $L^1(\nu)$ coincides with the topological dual of $L^1(\nu).$ For more details on K\"othe dual see \cite{ORP}.

\begin{thm}\label{2T2}If $\mu$ is a Rybakov control measure for $\nu$, then there exists $0< h\in L^1(\nu)^*$ such that $\|h\|_{L^1(\nu)^*}=\|\nu\|^{-1}$, where $L^1(\nu)^*$ is the K\"othe dual space of $L^1(\nu).$ Further $m_G(A)=\int_Ah\,d\mu,~A\in\mathcal{B}(G).$
\end{thm}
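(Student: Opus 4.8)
The plan is to produce the density $h$ via duality and then identify it with the Haar measure using translation invariance. First I would observe that the Haar measure defines a bounded linear functional on $L^1(\nu)$: for $f \in L^1(\nu)$, the assignment $f \mapsto \int_G f \, dm_G$ should be continuous because $\|f\|_\nu$ controls integration against scalar measures dominated by the semivariation, and $m_G$ itself is such a measure (indeed, one expects $m_G \leq \|\nu\| \cdot \mu / \mu(G)$-type comparisons, or more directly that $m_G$ is absolutely continuous with respect to any Rybakov control measure since $\nu \ll m_G$ and $m_G, \mu$ have the same null sets). Since $L^1(\nu)$ is an order continuous Banach function space over $\mu$, its topological dual coincides with the K\"othe dual $L^1(\nu)^*$, as noted before the statement. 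Hence this functional is represented by some $h \in L^1(\nu)^*$ with $m_G(A) = \int_A h \, d\mu$ for all $A \in \mathcal{B}(G)$, which gives the final displayed identity. Positivity $h > 0$ should follow because $m_G$ is a positive measure with full support and $\mu$ is mutually absolutely continuous with $\nu$.

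The second task is the norm computation $\|h\|_{L^1(\nu)^*} = \|\nu\|^{-1}$. Here I would use the K\"othe duality pairing: $\|h\|_{L^1(\nu)^*} = \sup\{|\int_G f h \, d\mu| : \|f\|_\nu \leq 1\} = \sup\{|\int_G f \, dm_G| : \|f\|_\nu \leq 1\}$. The key point is to evaluate this supremum. Since $m_G$ is a probability measure, taking $f = \chi_G$ gives $\int_G f \, dm_G = 1$ and $\|\chi_G\|_\nu = \|\nu\|$, so the functional norm is at least $\|\nu\|^{-1}$. For the reverse inequality I expect to use the translation invariance crucially: one averages over the group to show that the extremal behavior is achieved (up to scaling) by constants. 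Specifically, for any $f$ with $\|f\|_\nu \leq 1$, the translation invariance of $\nu$ together with the invariance of $m_G$ should let me replace $f$ by its group average $\int_G \tau_s f \, dm_G(s)$, which is a constant, without increasing the $L^1(\nu)$-norm (by convexity and Theorem \ref{2T1}) while preserving $\int_G f \, dm_G$. This pins down the supremum to come from constant functions, yielding $\|h\|_{L^1(\nu)^*} = \|\nu\|^{-1}$.

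The main obstacle I anticipate is establishing the reverse norm inequality rigorously, i.e.\ that constants are extremal for the functional $f \mapsto \int_G f\,dm_G$ on the unit ball of $L^1(\nu)$. The averaging argument requires justifying that $s \mapsto \tau_s f$ is suitably integrable into $L^1(\nu)$ (a Bochner-integrability issue) and that $\|\int_G \tau_s f \, dm_G(s)\|_\nu \leq \int_G \|\tau_s f\|_\nu \, dm_G(s) = \|f\|_\nu$, using norm translation invariance from Theorem \ref{2T1}. One must also confirm that this vector-valued average equals, pointwise $\mu$-a.e., the scalar constant $\int_G f \, dm_G$, which again leans on invariance of $m_G$ under translation. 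Once the averaging is legitimate, the computation $\int_G f \, dm_G = (\int_G f \, dm_G)\,\|\chi_G\|_\nu^{-1}\cdot\|\chi_G\|_\nu$ closes the estimate cleanly.
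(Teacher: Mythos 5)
Your opening step is the gap: you ``observe'' that $f\mapsto\int_G f\,dm_G$ is a bounded linear functional on $L^1(\nu)$, equivalently (via order continuity and K\"othe duality) that $m_G$ has a density with respect to $\mu$ lying in $L^1(\nu)^*$ --- but this is essentially the theorem itself, and both justifications you offer for it fail. A priori one only knows $\mu\ll m_G$: the Rybakov property gives that $\mu$ and $\nu$ have the same null sets, and the standing assumption of Section 4 is $\nu\ll m_G$; the reverse direction, that $\mu$-null sets are $m_G$-null (i.e.\ $m_G\ll\mu$), is exactly what the Remark following Theorem \ref{2T2} records as a \emph{consequence} of the theorem, so invoking ``$m_G$ and $\mu$ have the same null sets'' is circular. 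There is likewise no a priori domination of $m_G$ by a multiple of $\mu$ or by the semivariation of $\nu$: nothing in the hypotheses makes $m_G$ ``a measure dominated by the semivariation.'' Moreover, even granting mutual absolute continuity, Radon--Nikodym would only yield $h\in L^1(\mu)$; membership $h\in L^1(\nu)^*$ with finite norm is precisely the boundedness you are trying to prove, not a consequence of absolute continuity. Your averaging argument in the second half inherits the same problem: the identity $\int_G f(s^{-1}t)\,dm_G(s)=\int_G f\,dm_G$ and the identification of the vector-valued average with that constant presuppose $f\in L^1(m_G)$, which is unknown at that stage. This unproved inclusion $L^1(\nu)\subset L^1(m_G)$ is the crux that the paper resolves by an entirely different mechanism: Kakutani's fixed point theorem applied to the weak*-compact convex set $S=\{f^\prime\in B_{L^1(\nu)^\prime}:f^\prime(\chi_G)=\|\nu\|\}$, producing a translation-invariant functional whose variation is then identified with a multiple of $m_G$.

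The gap is repairable with the tools you already name, and doing so would give a proof genuinely different from the paper's. For a Borel representative of $f\in L^1(\nu)$, the function $(s,t)\mapsto|f(s^{-1}t)|$ is jointly measurable; by Theorem \ref{2T1} (with $\Phi(t)=t$) and the embedding $L^1(\nu)\hookrightarrow L^1(\mu)$ one has $\int_G|f(s^{-1}t)|\,d\mu(t)=\|\tau_sf\|_{L^1(\mu)}\leq\|x_0^\prime\|\,\|f\|_\nu$ for every $s\in G$, where $\mu=|\langle\nu,x_0^\prime\rangle|$. Tonelli's theorem together with the invariance of $m_G$ under inversion and right translation (G is compact, hence unimodular) then gives $\mu(G)\int_G|f|\,dm_G=\int_G\bigl(\int_G|f(s^{-1}t)|\,dm_G(s)\bigr)d\mu(t)\leq\|x_0^\prime\|\,\|f\|_\nu$, which establishes $L^1(\nu)\subset L^1(m_G)$ and boundedness of the functional \emph{before} any averaging. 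With finiteness in hand, your Bochner-average step (legitimized by Theorem \ref{homogeneity} for the continuity of $s\mapsto\tau_sf$, and Fubini for the pointwise identification) sharpens the bound to $|\int_G f\,dm_G|\leq\|f\|_\nu/\|\nu\|$, the test function $\chi_G$ gives the matching lower bound, K\"othe duality produces $h$, and positivity $h>0$ $\mu$-a.e.\ follows from the direction $\mu\ll m_G$ that \emph{is} known a priori (not from ``full support''). Led with this Tonelli step, your argument would bypass the fixed point theorem entirely; as written, however, it assumes its key conclusion at the outset.
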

\begin{proof}
Consider the space $L^1(\nu)^\prime$ with the weak* topology. Consider the family $(\tau_t^\ast)_{t\in G}$ of linear operators, where  $\tau_t^\ast:L^1(\nu)^\prime\rightarrow L^1(\nu)^\prime$ is given by $\tau_t^*(f^\prime)=f^\prime\circ\tau_t,~f^\prime\in L^1(\nu)^\prime.$ Taking translation as homeomorphism and $\Phi(t)=t$ in Theorem \ref{2T1}, we have that $\tau_t:L^1(\nu)\rightarrow L^1(\nu)$ is an isometric isomorphism, it follows that $\tau_t^\ast$ is a weak*-weak* continuous linear operator. Further, note that the map $t\mapsto\tau_t^\ast$ is a continuous map from the compact group $G$ into $\mathcal{B}(L^1(\nu)^\prime)$ and hence the set $\{\tau_t^\ast:t\in G\}$ is compact. In particular, it is a totally bounded group. 

Now, consider the set $S=\{f^\prime\in B_{L^1(\nu)^\prime}:f^\prime(\chi_G)=\|\nu\|\}.$ Since $\nu$ is non-zero,  it is clear that $\|\chi_G\|_\nu=\|\nu\|\neq 0$ and therefore $0\neq\chi_G\in L^1(\nu).$ Hence, by Hahn-Banach theorem, there exists $f^\prime\in L^1(\nu)^\prime$ such that $\|f^\prime\|_{L^1(\nu)^\prime}=1$ and $f^\prime(\chi_G)=\|\chi_G\|_\nu=\|\nu\|.$ This implies that $S$ is non-empty. 

Let $f_1^\prime,f_2^\prime\in S$ and $r\in(0,1).$ Then 
\begin{align*}
\|rf_1^\prime+(1-r)f_2^\prime\|_{L^1(\nu)^\prime}=&\underset{f\in B_{L^1(\nu)}}{\sup}|(rf_1^\prime+(1-r)f_2^\prime)(f)|\\\leq&\underset{f\in B_{L^1(\nu)}}{\sup}(r\|f_1^\prime\|_{L^1(\nu)^\prime}+(1-r)\|f_2^\prime\|_{L^1(\nu)^\prime})\|f\|_\nu\\\leq&1
\end{align*}
and $(rf_1^\prime+(1-r)f_2^\prime)(\chi_G)=rf_1^\prime(\chi_G)+(1-r)f_2^\prime(\chi_G)=\|\nu\|.$ Thus, $S$ is convex. 

Define $\beta:L^1(\nu)^\prime\rightarrow\mathbb{C}$ by $\beta(f^\prime)=f^\prime(\chi_G).$ It is clear that $\beta$ is continuous. Then the set $S=\beta^{-1}(\{\|\nu\|\})\cap B_{L^1(\nu)^\prime}$ is a closed subset of $B_{L^1(\nu)^\prime}$ and $B_{L^1(\nu)\prime}$ is compact by Banach-Alaoglu theorem. Therefore, $S$ is compact. 

Let $t\in G$ and $f^\prime\in S.$ Then, $$\|\tau_t^\ast(f^\prime)\|_{L^1(\nu)^\prime}\leq\|\tau_t^\ast\|\|f^\prime\|_{L^1(\nu)^\prime}=\|\tau_t\|\|f^\prime\|_{L^1(\nu)^\prime}\leq 1$$ which implies that $\tau_t^\ast(f^\prime)\in B_{L^1(\nu)^\prime}.$ Further, $\tau_t^\ast(f^\prime)(\chi_G)=f^\prime(\tau_t\chi_G)=f^\prime(\chi_G)=\|\nu\|.$ Thus, $\tau_t^\ast(f^\prime)\in S.$ Hence, $\tau_t^\ast$ maps the non-empty, convex and compact set $S$ univalently onto itself for every $t\in G$. Therefore, by \cite[Corollary of Theorem 2, p. 245]{K}, there exists a common fixed point $f_0^\prime\in S$ for the family $(\tau_t^\ast)_{t\in G}$, that is, $\tau_t^\ast(f_0^\prime)=f_0^\prime~\forall~t\in G.$ 

Now, by the definition of the set $S$ and from the definition of the norm, it follows that $\|f_0^\prime\|_{L^1(\nu)^\prime}=1.$ Further, by \cite[Pg. 35, Proposition 2.16]{ORP}, there exists $f_0^\ast\in L^1(\nu)^\ast$ such that $\|f_0^\ast\|_{L^1(\nu)^\ast}=\|f_0^\prime\|_{L^1(\nu)^\prime}=1$ and $$f_0^\prime(f)=\int_G ff_0^\ast\ d\mu\ \forall\ f\in L^1(\nu).$$ Since $\chi_G\in L^1(\nu),$ it follows that $f_0^\ast\in L^1(\mu).$ 

Define a scalar measure $\mu_{f_0^\ast}$ as $$\mu_{f_0^\ast}(A)=\int_A f_0^\ast\ d\mu,\ \ A\in\mathcal{B}(G).$$ As $f_0^\prime$ is a fixed point for the family $\{\tau_t^\ast:t\in G\},$ it follows that the measure $\mu_{f_0^\ast}$ is translation invariant and hence so is the measure $|\mu_{f_0^\ast}|.$ Further, the density of $|\mu_{f_0^\ast}|$ is $|f_0^\ast|$ and hence non-zero. We now claim that the measure $|\mu_{f_0^\ast}|$ is regular. Let $A\in\mathcal{B}(G).$ Then $$|\mu_{f_0^\ast}|(A)=\int_A |f_0^\ast|\ d\mu\leq\|f_0^\ast\|_{L^1(\nu)^\ast}\|\chi_A\|_{L^1(\nu)}=\|\nu\|(A).$$ It now follows, from our assumption that $\nu\ll m_G,$ that $|\mu_{f_0^\ast}|\ll m_G$ and therefore by Radon-Nikodym theorem it follows that $|\mu_{f_0^\ast}|$ is regular. In particular, $|\mu_{f_0^\ast}|$ is a Haar measure and hence there exists a positive constant $c$ such that $|\mu_{f_0^\ast}|=cm_G.$ Note that $c=|\mu_{f_0^\ast}|(G)$ as $m_G$ is normalized. As $|f_0^\ast|$ is the density for the measure $|\mu_{f_0^\ast}|,$ it follows that the density for the Haar measure $m_G$ is $\frac{1}{|\mu_{f_0^\ast}|(G)}|f_0^\ast|.$

Let $h=\frac{1}{|\mu_{f_0^\ast}|(G)}|f_0^\ast|.$ Using the fact that $f_0^*\in L^1(\nu)^\ast,$ it follows that $h\in L^1(\nu)^*.$ Further, $$\|\nu\|=f_0^\prime(\chi_G)=\int_G f_0^\ast\ d\mu=\left|\int_G f_0^\ast\ d\mu\right|\leq \int_G |f_0^\ast|\ d\mu=|\mu_{f_0^\ast}|(G).$$ Thus $\|h\|_{L^1(\nu)^\ast}=\|\nu\|^{-1}.$
\end{proof}
\begin{rem}
Theorem $\ref{2T2}$ implies that the measure $m_G$ is a Rybakov control measure with some density and therefore $\nu$-null sets coincides with $m_G$-null sets.
\end{rem}
\begin{cor}\label{2T4}
The embedding of $L^1_w(\nu)$ inside $L^1(G)$ is continuous. Further $\|f\|_1\leq\frac{1}{\|\nu\|}\|f\|_\nu,~f\in L^1_w(\nu).$
\end{cor}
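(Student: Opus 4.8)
The plan is to read off the corollary directly from the density representation of the Haar measure obtained in Theorem~\ref{2T2}. Fix a Rybakov control measure $\mu$ for $\nu$ and let $0<h\in L^1(\nu)^*$ be the function produced by that theorem, so that $m_G(A)=\int_A h\,d\mu$ for all $A\in\mathcal{B}(G)$ and $\|h\|_{L^1(\nu)^*}=\|\nu\|^{-1}$. The guiding observation is that computing an $L^1(G)$ norm amounts to integrating against $\mu$ with the weight $h$, and that this very weight is the element of the K\"othe dual whose norm was just evaluated; so the embedding estimate should drop out of the defining K\"othe duality inequality.

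The key step goes as follows. Let $f\in L^1_w(\nu)$. Since $dm_G=h\,d\mu$ with $h>0$, the change-of-measure formula for the nonnegative integrand $|f|$ gives
\[
\|f\|_1=\int_G|f|\,dm_G=\int_G|f|\,h\,d\mu.
\]
I would then invoke the defining inequality for the K\"othe dual norm: for every $g\in L^1(\nu)^*$ and every measurable $f$ with $\|f\|_\nu<\infty$ one has $\int_G|fg|\,d\mu\leq\|f\|_\nu\,\|g\|_{L^1(\nu)^*}$. Applying this with $g=h$ (and using $|h|=h$ since $h>0$) yields
\[
\|f\|_1\leq\|f\|_\nu\,\|h\|_{L^1(\nu)^*}=\frac{1}{\|\nu\|}\|f\|_\nu.
\]
In particular the integral is finite, so $f\in L^1(G)$, and the displayed bound is exactly the continuity of the inclusion $L^1_w(\nu)\hookrightarrow L^1(G)$ together with the asserted norm estimate.

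The only point requiring a word of care is that the K\"othe dual in Theorem~\ref{2T2} is the dual of $L^1(\nu)$, while the corollary is stated for $f\in L^1_w(\nu)$. I would note that $L^1(\nu)$ and $L^1_w(\nu)$ carry the same norm $\|\cdot\|_\nu$ and differ only in that $L^1(\nu)$ is the closure of the simple functions; consequently they share a common K\"othe dual, and the defining-supremum inequality above holds verbatim for $f\in L^1_w(\nu)$ (divide by $\|f\|_\nu$ when $f\neq0$ and pass to the supremum over the unit ball, the case $f=0$ being trivial). Beyond this, the argument is a direct substitution, so I expect no serious obstacle; essentially all the work has already been done in establishing the density $h$ and its K\"othe dual norm in Theorem~\ref{2T2}.
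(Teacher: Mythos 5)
Your overall route is the same as the paper's: both proofs read the estimate off the density $h$ from Theorem \ref{2T2} together with the K\"othe-duality inequality $\int_G |fh|\,d\mu\leq\|f\|_\nu\|h\|_{L^1(\nu)^*}$. The one place where you diverge is exactly the step you flag yourself, and your justification of it is not sound as written. The norm $\|h\|_{L^1(\nu)^*}$ is by definition a supremum of $\int_G|fh|\,d\mu$ over the unit ball of $L^1(\nu)$ only; to use it against $f\in L^1_w(\nu)$ you must show that the supremum over the (possibly larger) unit ball of $L^1_w(\nu)$ is the same. Your parenthetical argument (``divide by $\|f\|_\nu$ and pass to the supremum over the unit ball'') merely restates what has to be proved, and the preceding claim --- that the two spaces share a common K\"othe dual because they carry the same norm and $L^1(\nu)$ is the closure of the simple functions --- is not a proof: in general, for a closed subspace $Z_0$ of a Banach function space $Z$ carrying the inherited norm, the supremum of $\int_G|fg|\,d\mu$ over the unit ball of $Z_0$ can be strictly smaller than the supremum over the unit ball of $Z$, so equality here needs an argument specific to the order structure of these spaces.

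The missing argument is order-theoretic, and it is precisely what the paper's phrase ``by monotone convergence theorem, it suffices to assume that $f$ is a simple function'' encodes. Given $f\in L^1_w(\nu)$, choose simple functions $0\leq\phi_n\uparrow|f|$ pointwise. Each $\phi_n$ lies in $L^1(\nu)$, so the K\"othe inequality applies to it, and since $\|\cdot\|_\nu$ is a lattice norm, $\|\phi_n\|_\nu\leq\|f\|_\nu$. Hence
\[
\int_G|f|h\,d\mu=\lim_n\int_G\phi_n h\,d\mu\leq\sup_n\|\phi_n\|_\nu\|h\|_{L^1(\nu)^*}\leq\frac{1}{\|\nu\|}\|f\|_\nu,
\]
the first equality holding by the monotone convergence theorem (note $h>0$). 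With this insertion your proof is complete and coincides with the paper's; it also establishes, as a byproduct, the fact you wanted to quote --- that $h$ acts on all of $L^1_w(\nu)$ with the same bound --- which is true, just not for the reason you gave.
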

\begin{proof}
Let $\mu$ be a Rybakov control measure for $\nu.$ Since $\nu\ll m_G,$ by Theorem \ref{2T2}, there exists a function $0< h\in L^1(\nu)^\ast$ such that the Haar measure $m_G$ is given by $$m_G(A)=\int_A h\ d\mu,\ A\in\mathcal{B}(G)$$ and $\|h\|_{L^1(\nu)^\ast}=\frac{1}{\|\nu\|}.$ Now, let $f\in L^1_w(\nu).$ By monotone convergence theorem, it suffices to assume that $f$ is a simple function. Then,
\begin{align*}
\|f\|_1=\int_G |f|\ dm_G=&\int_G h|f|\ d\mu\leq \|h\|_{L^1(\nu)^\ast}\|f\|_{\nu}=\frac{1}{\|\nu\|}\|f\|_\nu.\qedhere
\end{align*}
\end{proof}
Since $L^\Phi_w(\nu)\hookrightarrow L^1_w(\nu),$ we have that $L^\Phi_w(\nu)\hookrightarrow L^1(G).$ Therefore, we can consider the classical convolution of any two functions in $L^\Phi_w(\nu).$ In the next theorem, we prove that the (weak) Orlicz space with respect to $\nu$ is an $L^1(G)$-module for the classical convolution.
\begin{thm}\label{2T3}
Let $f\in L^1(G)$ and $g\in L^\Phi_w(\nu).$ Then $f*g\in L^\Phi_w(\nu)$ with $\|f*g\|_{\nu,\Phi}\leq\|f\|_1\|g\|_{\nu,\Phi}$. In particular, if $g\in L^\Phi(\nu)$ then $f*g\in L^\Phi(\nu).$
\end{thm}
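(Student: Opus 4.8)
The plan is to reduce the estimate to a pointwise modular inequality for each scalar measure $|\langle\nu,x'\rangle|$ and then exploit the translation identity already established inside the proof of Theorem \ref{2T1}. First I would record that $g\in L^\Phi_w(\nu)\hookrightarrow L^1_w(\nu)\hookrightarrow L^1(G)$ (the last embedding being Corollary \ref{2T4}), so that for $f\in L^1(G)$ the convolution $f*g(t)=\int_G f(s)g(s^{-1}t)\,dm_G(s)=\int_G f(s)(\tau_s g)(t)\,dm_G(s)$ is a well-defined element of $L^1(G)$; in particular it is measurable and finite a.e., and it only remains to control its $\|\cdot\|_{\nu,\Phi}$-norm.

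For the key inequality, fix $x'\in B_{X'}$ and assume $\|f\|_1,\|g\|_{\nu,\Phi}>0$ (the degenerate cases being trivial); write $a=\|f\|_1$ and $b=\|g\|_{\nu,\Phi}$. Since $|f(s)|\,dm_G(s)/a$ is a probability measure on $G$, Jensen's inequality applied to the convex function $\Phi$ gives
\begin{equation*}
\Phi\!\left(\frac{|f*g(t)|}{ab}\right)\leq\int_G\frac{|f(s)|}{a}\,\Phi\!\left(\frac{|(\tau_s g)(t)|}{b}\right)dm_G(s).
\end{equation*}
Integrating against $d|\langle\nu,x'\rangle|(t)$ and applying Tonelli's theorem (all integrands nonnegative) interchanges the order of integration and leaves the inner integral $\int_G\Phi(|\tau_s g|/b)\,d|\langle\nu,x'\rangle|$. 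The crux is that the computation carried out in the proof of Theorem \ref{2T1}, specialised to the left translation $h_s(t)=st$ (so that $g_{h_s}=\tau_s g$), rewrites this inner integral as $\int_G\Phi(|g|/b)\,d|\langle\nu,x_{h_s}'\rangle|$ for the functional $x_{h_s}'\in X'$ supplied by Lemma \ref{dual}, which satisfies $\|x_{h_s}'\|\leq\|x'\|\leq 1$. Because $b=\|g\|_{\nu,\Phi}$ dominates $\|g\|_{L^\Phi(|\langle\nu,y'\rangle|)}$ for every $y'\in B_{X'}$ simultaneously, the Luxemburg-norm characterisation yields $\int_G\Phi(|g|/b)\,d|\langle\nu,x_{h_s}'\rangle|\leq 1$. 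Substituting this back shows $\int_G\Phi(|f*g|/(ab))\,d|\langle\nu,x'\rangle|\leq a^{-1}\int_G|f|\,dm_G=1$, hence $\|f*g\|_{L^\Phi(|\langle\nu,x'\rangle|)}\leq ab$; taking the supremum over $x'\in B_{X'}$ gives $\|f*g\|_{\nu,\Phi}\leq\|f\|_1\|g\|_{\nu,\Phi}$, and in particular $f*g\in L^\Phi_w(\nu)$.

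The step I expect to be the main obstacle is exactly this interchange between translating the argument of $g$ and changing the controlling functional: for a \emph{fixed} $x'$ the scalar measure $|\langle\nu,x'\rangle|$ is \emph{not} translation invariant, so the classical Young inequality for a single measure is unavailable. What rescues the argument is twofold: the translation invariance of $\nu$ (through Lemma \ref{dual}) lets one absorb the translation into a new functional still lying in the unit ball, and the single scalar $b=\|g\|_{\nu,\Phi}$ bounds the modular of $g$ against \emph{all} measures $|\langle\nu,y'\rangle|$, $y'\in B_{X'}$, at once; it is this uniformity that makes the $s$-integral collapse to $\|f\|_1$.

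For the final assertion I would argue by density together with the contraction just proved. Since $L^\Phi(\nu)$ is closed in $(L^\Phi_w(\nu),\|\cdot\|_{\nu,\Phi})$ and $\|f*u-f*v\|_{\nu,\Phi}\leq\|f\|_1\|u-v\|_{\nu,\Phi}$, it suffices to approximate $f*g$ by elements of $L^\Phi(\nu)$. Approximate $f$ in $L^1(G)$ by continuous functions $f_n$ and $g$ in $\|\cdot\|_{\nu,\Phi}$ by simple functions $\phi_m$ (possible since simple functions are dense in $L^\Phi(\nu)$). Using that the compact group $G$ is unimodular and that each $f_n$ is uniformly continuous, a change of variables writes $f_n*\phi_m(t)=\int_G f_n(tu^{-1})\phi_m(u)\,dm_G(u)$, from which one checks that $f_n*\phi_m$ is continuous and hence lies in $L^\Phi(\nu)$ by Proposition \ref{density}. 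Fixing $n$ and letting $m\to\infty$, the contraction estimate gives $f_n*\phi_m\to f_n*g$, so $f_n*g\in L^\Phi(\nu)$ by closedness; letting $n\to\infty$, the contraction estimate gives $f_n*g\to f*g$, whence $f*g\in L^\Phi(\nu)$.
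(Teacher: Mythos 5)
Your proof is correct, but it reaches the key inequality by a genuinely different route than the paper. The paper's argument is a three-line vector-valued computation: it writes $f*g=\int_G f(s)\tau_s g\,dm_G(s)$ as an $L^\Phi_w(\nu)$-valued integral, applies the triangle (Minkowski) inequality for such integrals, and invokes $\|\tau_s g\|_{\nu,\Phi}=\|g\|_{\nu,\Phi}$ from Theorem \ref{2T1}. You instead argue scalar-measure-wise: for each fixed $x'\in B_{X'}$ you combine Jensen's inequality for the probability measure $|f|\,dm_G/\|f\|_1$, Tonelli, the modular identity underlying the proof of Theorem \ref{2T1} (which converts the translation of $g$ into the replacement $x'\mapsto x'_{h_s}\in B_{X'}$ furnished by Lemma \ref{dual}), and the standard fact that the modular evaluated at the Luxemburg norm is at most $1$; the uniformity of the bound $\|g\|_{L^\Phi(|\langle\nu,y'\rangle|)}\le\|g\|_{\nu,\Phi}$ over all $y'\in B_{X'}$ then collapses the $s$-integral to $\|f\|_1$. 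What your longer argument buys is rigor at a point the paper passes over: the paper's Minkowski step tacitly requires either strong (Bochner) measurability of $s\mapsto f(s)\tau_s g$ as an $L^\Phi_w(\nu)$-valued map --- not available in general, since continuity of $s\mapsto \tau_s g$ is established (Theorem \ref{homogeneity}) only for $g\in L^\Phi(\nu)$ --- or an appeal to a Minkowski integral inequality for Banach function norms; your proof needs only Tonelli for nonnegative jointly measurable functions and monotone convergence, so nothing vector-valued at all. For the ``in particular'' statement the two proofs are essentially the same density-plus-contraction argument: the paper convolves simple approximants of both $f$ and $g$ (noting the result is bounded, hence in $L^\Phi(\nu)$) and uses closedness of $L^\Phi(\nu)$ in $L^\Phi_w(\nu)$, while you convolve continuous approximants of $f$ with simple approximants of $g$; the difference is immaterial.
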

\begin{proof}Let $f\in L^1(G)$ and $g\in L^\Phi_w(\nu).$ Using Theorem \ref{2T1}, with translation as a homeomorphism, we have that $\tau_sg\in L^\Phi_w(\nu)$ with $\|\tau_sg\|_{\nu,\Phi}=\|g\|_{\nu,\Phi}.$ Then 
\begin{align*}
\|f*g\|_{\nu,\Phi}=&\left\|\int_Gf(s)\tau_sg\,dm_G(s)\right\|_{\nu,\Phi}\\\leq&\int_G|f(s)|\|\tau_sg\|_{\nu,\Phi}\,dm_G(s)
\\=&\int_G|f(s)|\|g\|_{\nu,\Phi}\,dm_G(s)=\|f\|_1\|g\|_{\nu,\Phi}.
\end{align*}
Now, let $g\in L^\Phi(\nu),$ then there exists two sequences $(\phi_n)$ and $(\psi_n)$ of simple functions converging to $f$ in $L^1(G)$ and $g$ in $L^\Phi(\nu)$ respectively. Since, for each $n\in\mathbb{N},$ $\phi_n\in L^1(G)$ and $\psi_n\in L^\infty(G),$ we have that $\phi_n*\psi_n$ is bounded and therefore $\phi_n*\psi_n\in L^\Phi(\nu).$ Further,
\begin{align*}
\|\phi_n*\psi_n-f*g\|_{\nu,\Phi}\leq&\|(\phi_n-f)*\psi_n\|_{\nu,\Phi}+\|f*(\psi_n-g)\|_{\nu,\Phi}\\\leq&\|\phi_n-f\|_1\|\psi_n\|_{\nu,\Phi}+\|f\|_1\|\psi_n-g\|_{\nu,\Phi}.
\end{align*}
Therefore, the sequence $(\phi_n*\psi_n)$ converges to $f*g$ in the $\|\cdot\|_{\nu,\Phi}$ norm. Using the fact that $L^\Phi(\nu)$ is a closed subspace of $L^\Phi_w(\nu),$ we have that $f*g\in L^\Phi(\nu).$ 
\end{proof}

Our next result is the analogue of \cite[Proposition 2.42]{F}.
\begin{thm}[Existence of left approximate identity]\label{approximateidentity}The space $L^\Phi(\nu)$ has a left approximate identity, that is, there exists a net $(g_\alpha)_{\alpha\in\wedge}$ in $L^\Phi(\nu)$ such that  
\begin{enumerate}[(i)]
\item $(g_\alpha)_{\alpha\in\wedge}\subset \mathcal{C}(G),$
\item $g_\lambda\geq 0\ \forall\ \lambda\in\wedge,$
\item $supp (g_\beta)\subset supp (g_\alpha)\mbox{ if }\alpha\preccurlyeq\beta,\ \alpha,\beta\in\wedge,$
\item $supp(g_\lambda)\rightarrow\{e\}$ as $\lambda\rightarrow\infty,$
\item $\int_Gg_\lambda\,dm_G=1\ \forall\ \lambda\in\wedge$ and
\item $\underset{\lambda}{\lim}\|g_\lambda*f-f\|_{\nu,\Phi}=0,~f\in L^\Phi(\nu).$
\end{enumerate}   
\end{thm}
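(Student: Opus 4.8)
The plan is to split the argument into two essentially independent parts: the construction of a net $(g_\lambda)_{\lambda\in\wedge}$ realizing the combinatorial and topological conditions (i)--(v), and the verification of the convergence (vi), which is where the structure of $L^\Phi(\nu)$ actually enters. For the construction I would take $\wedge$ to be a neighbourhood basis at the identity $e$ of $G$, directed by reverse inclusion, so that $\alpha\preccurlyeq\beta$ means the neighbourhood indexed by $\beta$ is contained in that indexed by $\alpha$. Since a compact group is normal, Urysohn's lemma yields for each such neighbourhood $U$ a nonnegative function in $\mathcal{C}(G)$ supported inside $\overline{U}$, and dividing by its (strictly positive) integral against $m_G$ produces $g_U\geq0$ with $g_U\in\mathcal{C}(G)$ and $\int_G g_U\,dm_G=1$. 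This gives (i), (ii) and (v) at once, while (iv) follows because the supports are trapped in $\overline{U}$ and these shrink to $\{e\}$ along the net.

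The analytic heart is (vi), and here the two structural theorems already proved do all the work. Exactly as in the proof of Theorem \ref{2T3}, I would represent the convolution as a vector-valued integral in $L^\Phi(\nu)$,
\[
g_\lambda*f=\int_G g_\lambda(s)\,\tau_s f\,dm_G(s),
\]
and then use the normalization $\int_G g_\lambda\,dm_G=1$ from (v) to write
\[
g_\lambda*f-f=\int_G g_\lambda(s)\,(\tau_s f-f)\,dm_G(s).
\]
The same Minkowski-type integral inequality for $\|\cdot\|_{\nu,\Phi}$ that underlies the module estimate in Theorem \ref{2T3} then gives
\[
\|g_\lambda*f-f\|_{\nu,\Phi}\leq\int_G g_\lambda(s)\,\|\tau_s f-f\|_{\nu,\Phi}\,dm_G(s).
\]
Now homogeneity of $L^\Phi(\nu)$ (Theorem \ref{homogeneity}) supplies the continuity of $s\mapsto\tau_s f$, so for $\epsilon>0$ there is a neighbourhood $N$ of $e$ with $\|\tau_s f-f\|_{\nu,\Phi}<\epsilon$ for all $s\in N$. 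By (iv) we have $\mathrm{supp}(g_\lambda)\subseteq N$ for all large $\lambda$, and for such $\lambda$ the right-hand integrand is supported where $\|\tau_s f-f\|_{\nu,\Phi}<\epsilon$, so the whole integral is at most $\epsilon\int_G g_\lambda\,dm_G=\epsilon$. This proves (vi).

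I expect the genuine obstacle to lie not in (vi) but in securing the support-nesting condition (iii) simultaneously with (i)--(v). An arbitrary Urysohn function attached to each basic neighbourhood need not have nested supports across the directed set, so I would instead fix the neighbourhood basis to have nested closures---arranging $\overline{U_\beta}\subseteq\overline{U_\alpha}$ whenever $\alpha\preccurlyeq\beta$---and choose each $g_\alpha$ to be strictly positive on $U_\alpha$ and to vanish outside, so that $\mathrm{supp}(g_\alpha)=\overline{U_\alpha}$ and (iii) becomes automatic. A minor secondary point is the legitimacy of the vector-valued integral and of pulling $\|\cdot\|_{\nu,\Phi}$ inside it; but since $L^\Phi_w(\nu)\hookrightarrow L^1(G)$ by Corollary \ref{2T4} the convolution is well defined, and the identical manipulation has already been justified in Theorem \ref{2T3}, so no new difficulty arises there.
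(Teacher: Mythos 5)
Your proposal is correct and follows essentially the same route as the paper: the paper likewise takes the standard net satisfying (i)--(v) (citing it as well-known in $L^1(G)$), writes $g_\lambda*f-f=\int_G g_\lambda(s)(\tau_sf-f)\,dm_G(s)$, bounds it by $\int_G g_\lambda(s)\|\tau_sf-f\|_{\nu,\Phi}\,dm_G(s)$ via the same Minkowski-type inequality, and concludes by the homogeneity of $L^\Phi(\nu)$ (Theorem \ref{homogeneity}). Your write-up merely makes explicit the Urysohn-type construction and the $\epsilon$-argument that the paper leaves implicit.
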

\begin{proof}
It is well-known that a net $(g_\alpha)_{\alpha\in\wedge}$ satisfying (i) to (v) exists in $L^1(G).$ This net satisfies our requirements. Indeed, for $\lambda\in\wedge$ and $f\in\ L^\Phi(\nu),$
\begin{align*}
\|g_\lambda*f-f\|_{\nu,\Phi}=&\left\|\int_Gg_\lambda(s)\tau_sf\,dm_G(s)-f\int_Gg_\lambda(s)\,dm_G(s)\right\|_{\nu,\Phi}\\\leq&\int_Gg_\lambda(s)\|\tau_sf-f\|_{\nu,\Phi}\,dm_G(s).
\end{align*}
Taking $\lambda\rightarrow\infty$ and using Theorem \ref{homogeneity}, the result follows.
\end{proof}
The next theorem is an analogue of \cite[Theorem 2.43]{F}.
\begin{thm}Let $I$ be a closed subspace of $L^\Phi(\nu).$ Then $I$ is  a left $L^1(G) $-submodule of $L^\Phi(\nu)$ if and only if $I$ is a translation invariant subspace.
\end{thm}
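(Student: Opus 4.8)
The plan is to prove both implications using the convolution module structure of Theorem \ref{2T3} together with the approximate identity of Theorem \ref{approximateidentity}. The one preliminary I would record is the elementary commutation identity $\tau_s(f*g)=(\tau_s f)*g$ for $f\in L^1(G)$ and $g\in L^\Phi(\nu)$, which follows from the left invariance of $m_G$ via the substitution $s=ru$ in the defining integral $(f*g)(t)=\int_G f(r)g(r^{-1}t)\,dm_G(r)$. I would also use throughout that each $\tau_s$ is an isometry of $L^\Phi(\nu)$ (Theorem \ref{2T1}) and that $s\mapsto\tau_s g$ is norm continuous (Theorem \ref{homogeneity}).

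For the direction ``submodule $\Rightarrow$ translation invariant,'' fix $g\in I$ and $s\in G$, and let $(g_\lambda)_{\lambda\in\wedge}$ be the approximate identity of Theorem \ref{approximateidentity}. Each $g_\lambda\in\mathcal{C}(G)\subset L^1(G)$, hence $\tau_s g_\lambda\in L^1(G)$, and since $I$ is a submodule we have $(\tau_s g_\lambda)*g\in I$. By the commutation identity $(\tau_s g_\lambda)*g=\tau_s(g_\lambda*g)$. Because $\tau_s$ is isometric and $g_\lambda*g\to g$ in $L^\Phi(\nu)$, it follows that $\tau_s(g_\lambda*g)\to\tau_s g$. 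As $I$ is closed, $\tau_s g\in I$, giving translation invariance.

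For the converse, suppose $I$ is a closed translation invariant subspace and fix $f\in L^1(G)$ and $g\in I$. The key observation is that $f*g$ is the vector-valued integral $\int_G f(s)\tau_s g\,dm_G(s)$ in $L^\Phi(\nu)$, whose integrand $s\mapsto f(s)\tau_s g$ takes values in $I$, since $I$ is a subspace and $\tau_s g\in I$. I would then argue that such an integral lies in $I$; the cleanest route is Hahn-Banach. If $f*g\notin I$, choose a continuous linear functional $\psi$ on $L^\Phi(\nu)$ that vanishes on $I$ with $\psi(f*g)\neq 0$, and pass $\psi$ inside the integral to obtain $\psi(f*g)=\int_G f(s)\psi(\tau_s g)\,dm_G(s)=0$, a contradiction. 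Hence $f*g\in I$, so $I$ is a submodule.

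The main obstacle I anticipate is purely in justifying the integral manipulations in the converse: confirming that $s\mapsto f(s)\tau_s g$ is genuinely Bochner integrable, which needs the norm continuity of $s\mapsto\tau_s g$ from Theorem \ref{homogeneity} together with $f\in L^1(G)$, and that continuous functionals commute with the integral. The algebraic content of both directions is short; everything rests on the approximate identity and on $L^\Phi(\nu)$ being a homogeneous Banach space, both already established.
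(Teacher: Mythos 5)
Your proof is correct and follows essentially the same route as the paper: the forward direction uses the approximate identity of Theorem \ref{approximateidentity} together with the identity $(\tau_s g_\lambda)*g=\tau_s(g_\lambda*g)$ and the isometry of $\tau_s$, and the converse uses the representation $f*g=\int_G f(s)\tau_s g\,dm_G(s)$ as an $I$-valued vector integral. The only difference is cosmetic: where the paper simply asserts that this integral lies in the closed linear span of $I$, you justify the same fact via a Hahn--Banach separation argument, which is a welcome filling-in of detail rather than a different approach.
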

\begin{proof}
Suppose that $I$ is a left $L^1(G)$-submodule of $L^\Phi(\nu).$ Let $f\in I$ and $t\in G.$ Let $(g_\lambda)_{\lambda\in\wedge}$ be a left approximate identity of $L^\Phi(\nu),$ provided by Theorem \ref{approximateidentity}. Then, using Theorem \ref{2T1}, $$\|(\tau_tg_\lambda)*f-\tau_tf\|_{\nu,\Phi}=\|\tau_t(g_\lambda*f-f)\|_{\nu,\Phi}=\|g_\lambda*f-f\|_{\nu,\Phi}\rightarrow 0.$$ 
Since $(\tau_tg_\lambda)*f\in I$ and as $I$ is a closed subspace of $L^\Phi(\nu)$, it follows that $\tau_tf\in I.$

Now let $I$ be a translation invariant space. Let $f\in L^1(G)$ and $g\in I.$ Since $f*g=\int_Gf(s)\tau_sg\,dm_G(s)$, it is clear that $f*g$ belongs to the closed linear span of the functions from $I.$ Since $I$ is a closed subspace of $L^\Phi(\nu)$, it follows that $f*g\in I.$
\end{proof}

Now we discuss the convolution structure in the spaces $L^\Phi_w(\nu)$ and $L^\Phi(\nu)$.
\begin{thm}Let $f,g\in L^\Phi_w(\nu).$ Then $f*g\in L^\Phi_w(\nu)$ with $$\|f*g\|_{\nu,\Phi}\leq\frac{2}{\|\nu\|}\|\chi_G\|_{\nu,\Psi}\|f\|_{\nu,\Phi}\|g\|_{\nu,\Phi}.$$
In particular, if $f,g\in L^\Phi(\nu)$ then $f*g\in L^\Phi(\nu).$
\end{thm}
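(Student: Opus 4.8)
The plan is to deduce this from the one-sided estimate already proved in Theorem \ref{2T3}, treating the first factor $f$ as an element of $L^1(G)$ rather than of $L^\Phi_w(\nu)$. The key observation is that the hypotheses of Theorem \ref{2T3} are asymmetric: the first function is only required to lie in $L^1(G)$, while the second must lie in $L^\Phi_w(\nu)$. So if I can show that every $f\in L^\Phi_w(\nu)$ sits inside $L^1(G)$ with its $L^1$-norm controlled by $\|f\|_{\nu,\Phi}$, then the assertion will follow by substituting into the bound $\|f*g\|_{\nu,\Phi}\leq\|f\|_1\|g\|_{\nu,\Phi}$.

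First I would record the chain of continuous embeddings $L^\Phi_w(\nu)\hookrightarrow L^1_w(\nu)\hookrightarrow L^1(G)$. The first embedding is the one noted in Section 2: for $f\in L^\Phi_w(\nu)$ one has $\|f\|_\nu\leq 2\|\chi_G\|_{\nu,\Psi}\|f\|_{\nu,\Phi}$ by \cite[Proposition 4.2]{FF}. The second embedding is precisely Corollary \ref{2T4}, which gives $\|f\|_1\leq\frac{1}{\|\nu\|}\|f\|_\nu$ for every $f\in L^1_w(\nu)$; this is available because $m_G$ is a Rybakov control measure for $\nu$ with a density, as established in Theorem \ref{2T2}. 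Composing these two inequalities yields
\[
\|f\|_1\leq\frac{1}{\|\nu\|}\|f\|_\nu\leq\frac{2}{\|\nu\|}\|\chi_G\|_{\nu,\Psi}\|f\|_{\nu,\Phi},
\]
so in particular $f\in L^1(G)$.

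With this in hand the main inequality is immediate: applying Theorem \ref{2T3} to $f\in L^1(G)$ and $g\in L^\Phi_w(\nu)$ gives $f*g\in L^\Phi_w(\nu)$ with
\[
\|f*g\|_{\nu,\Phi}\leq\|f\|_1\|g\|_{\nu,\Phi}\leq\frac{2}{\|\nu\|}\|\chi_G\|_{\nu,\Psi}\|f\|_{\nu,\Phi}\|g\|_{\nu,\Phi},
\]
which is the asserted bound. For the final assertion, if $f,g\in L^\Phi(\nu)$ then $f$ still lies in $L^1(G)$ by the embedding above while $g\in L^\Phi(\nu)$, so the ``in particular'' clause of Theorem \ref{2T3} yields $f*g\in L^\Phi(\nu)$. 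I do not anticipate a serious obstacle here, as the statement is essentially a corollary of the preceding two results; the only point requiring care is the bookkeeping of the two embedding constants so that they multiply to exactly $\frac{2}{\|\nu\|}\|\chi_G\|_{\nu,\Psi}$, together with the (automatic) observation that $f*g$ is well-defined since both factors lie in $L^1(G)$.
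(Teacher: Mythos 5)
Your proposal is correct and follows essentially the same route as the paper: both reduce the statement to Theorem \ref{2T3} via the embeddings $L^\Phi_w(\nu)\hookrightarrow L^1_w(\nu)\hookrightarrow L^1(G)$, using \cite[Proposition 4.2]{FF} and Corollary \ref{2T4} to obtain the constant $\frac{2}{\|\nu\|}\|\chi_G\|_{\nu,\Psi}$. The only cosmetic difference is in the last claim, where you invoke the ``in particular'' clause of Theorem \ref{2T3} directly, while the paper re-cites density of simple functions in $L^\Phi(\nu)$; these amount to the same argument.
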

\begin{proof}
If $f,g\in L^\Phi_w(\nu)$ then using Theorem \ref{2T3} and the fact that $L^\Phi_w(\nu)\hookrightarrow L^1(G),$ we have $f*g\in L^\Phi_w(\nu)$ with $\|f*g\|_{\nu,\Phi}\leq\|f\|_1\|g\|_{\nu,\Phi}.$  Further, using \cite[Proposition 4.2]{FF} and Corollary \ref{2T4}, we have 
\begin{align*}
\|f*g\|_{\nu,\Phi}\leq&\|f\|_1\|g\|_{\nu,\Phi}\leq\frac{1}{\|\nu\|}\|f\|_\nu\|g\|_{\nu,\Phi}\\\leq&\frac{2}{\|\nu\|}\|\chi_G\|_{\nu,\Psi}\|f\|_{\nu,\Phi}\|g\|_{\nu,\Phi}.
\end{align*}
The conclusion for $L^\Phi(\nu)$ follows from the density of $\mathcal{S}(G)$ in $L^\Phi(\nu).$
\end{proof}

We finish this section with a remark on the Banach algebra structure of the (weak) Orlicz spaces with respect to $\nu.$
\begin{rem}If the vector measure $\nu$ satisfies $\|\nu\|\geq 2\|\chi_G\|_{\nu,\Psi},$ then the spaces $L^\Phi_w(\nu)$ and $L^\Phi(\nu)$  become Banach algebras with classical convolution as multiplication.
\end{rem}

\section{Another convolution product for $L^\Phi_w(\nu)$ and $L^\Phi(\nu)$}
In this final section, we define another convolution product for (weak) Orlicz spaces with respect to the vector measure $\nu$. The main result of this section is Theorem 5.2.

If $\nu\ll m_G,$ then $\langle\nu,x^\prime\rangle\ll m_G,~\forall\ x^\prime\in X^\prime.$ Therefore, by Radon-Nikodym theorem there exists a function $h_{x^\prime}\in L^1(G)$ such that $d\langle\nu,x^\prime\rangle=h_{x^\prime}\,dm_G.$ Further, for $f\in L^1_w(\nu)$, $$\int_G|f|(t)d|\langle\nu,x^\prime\rangle|(t)=\int_G|fh_{x^\prime}|(t)\,dm_G(t).$$ Hence, $fh_{x^\prime}\in L^1(G)$ for every $f\in L^1_w(\nu).$ With this as motivation, we define another convolution product in the spirit of \cite[Definition 4.1]{CFNP} and study some inequalities.
\begin{defn}\label{conv2}
Let $1\leq p\leq\infty.$ The convolution of functions $f\in L^1_w(\nu)$ and $g\in L^p(G)$ with respect to a vector measure $\nu\in M_{ac}(G,X)$ is defined by $$f\mathbf{*}_{\nu}g(x^\prime)=(fh_{x^\prime})*g,~x^\prime\in X^\prime.$$
\end{defn}
Note that $f \mathbf{*}_{\nu}g(x^\prime)(t)=\int_Gf(s)\tau_sg(t)\,d\langle
\nu,x^\prime\rangle(s),~\forall\ t\in G.$ Since for a norm integral translation invariant vector measure $\nu,$ $L^\Phi_w(\nu)$ is continuously embedded in $L^1(G),$  Definition \ref{conv2} makes sense for functions $f\in L^1_w(\nu)$ and $g\in L^\Phi_w(\nu)$. With this note, here is the main result of this section.
\begin{thm}\label{convo2ineq}
Let $\nu\in M_{ac}(G,X)$ be a norm integral translation invariant vector measure. If $f\in L^1_w(\nu)$ and $g\in L^\Phi_w(\nu),$ then $f\mathbf{*}_{\nu}g\in \mathcal{B}(X^\prime, L^\Phi_w(\nu))$ and $$\|f\mathbf{*}_{\nu}g\|_{\mathcal{B}(X^\prime, L^\Phi_w(\nu))}\leq\|f\|_\nu\|g\|_{\nu,\Phi}.$$ In particular, if $f\in L^1(\nu)$ and $g\in L^\Phi(\nu)$ then $f\mathbf{*}_{\nu}g\in \mathcal{B}(X^\prime, L^\Phi(\nu)).$
\end{thm}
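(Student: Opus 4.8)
The plan is to reduce everything to Theorem \ref{2T3}, treating the operator $f\mathbf{*}_{\nu}g$ one functional $x^\prime$ at a time. First I would fix $x^\prime\in X^\prime$ and recall from Definition \ref{conv2} that $f\mathbf{*}_{\nu}g(x^\prime)=(fh_{x^\prime})*g$, where $h_{x^\prime}$ is the Radon--Nikodym density of $\langle\nu,x^\prime\rangle$ with respect to $m_G$. As observed in the discussion preceding Definition \ref{conv2}, membership $f\in L^1_w(\nu)$ forces $fh_{x^\prime}\in L^1(G)$, so $fh_{x^\prime}$ is a legitimate left convolver. Applying Theorem \ref{2T3} to the $L^1(G)$-function $fh_{x^\prime}$ and to $g\in L^\Phi_w(\nu)$ then yields at once $f\mathbf{*}_{\nu}g(x^\prime)\in L^\Phi_w(\nu)$ together with the estimate $\|f\mathbf{*}_{\nu}g(x^\prime)\|_{\nu,\Phi}\leq\|fh_{x^\prime}\|_1\,\|g\|_{\nu,\Phi}$.

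The second step is to convert the $L^1(G)$-norm $\|fh_{x^\prime}\|_1$ into a quantity controlled by $\|f\|_\nu$. Since $d\langle\nu,x^\prime\rangle=h_{x^\prime}\,dm_G$, we have $d|\langle\nu,x^\prime\rangle|=|h_{x^\prime}|\,dm_G$, and hence $\|fh_{x^\prime}\|_1=\int_G|f|\,d|\langle\nu,x^\prime\rangle|=\|f\|_{L^1(|\langle\nu,x^\prime\rangle|)}$. For $x^\prime\in B_{X^\prime}$ this last quantity is at most $\sup_{x^\prime\in B_{X^\prime}}\|f\|_{L^1(|\langle\nu,x^\prime\rangle|)}=\|f\|_\nu$. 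Combining this with the first step gives $\|f\mathbf{*}_{\nu}g(x^\prime)\|_{\nu,\Phi}\leq\|f\|_\nu\|g\|_{\nu,\Phi}$ for every $x^\prime\in B_{X^\prime}$.

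It then remains to verify that $x^\prime\mapsto f\mathbf{*}_{\nu}g(x^\prime)$ is genuinely a bounded linear operator. Linearity I would derive from the linearity of $x^\prime\mapsto\langle\nu,x^\prime\rangle$ together with the (a.e.) uniqueness of Radon--Nikodym densities, which give $h_{x^\prime_1+\alpha x^\prime_2}=h_{x^\prime_1}+\alpha h_{x^\prime_2}$ in $L^1(G)$; combined with the bilinearity of the classical convolution this makes $x^\prime\mapsto(fh_{x^\prime})*g$ linear. Taking the supremum of the per-functional estimate of the previous paragraph over $x^\prime\in B_{X^\prime}$ then delivers both $f\mathbf{*}_{\nu}g\in\mathcal{B}(X^\prime,L^\Phi_w(\nu))$ and the asserted bound $\|f\mathbf{*}_{\nu}g\|_{\mathcal{B}(X^\prime,L^\Phi_w(\nu))}\leq\|f\|_\nu\|g\|_{\nu,\Phi}$. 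For the ``in particular'' assertion, when $f\in L^1(\nu)$ and $g\in L^\Phi(\nu)$ I would invoke the second clause of Theorem \ref{2T3}: since $fh_{x^\prime}\in L^1(G)$ and $g\in L^\Phi(\nu)$, each value $(fh_{x^\prime})*g$ lies in the closed subspace $L^\Phi(\nu)$, so the operator maps into $L^\Phi(\nu)$ and hence belongs to $\mathcal{B}(X^\prime,L^\Phi(\nu))$.

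I expect no deep obstacle, since the analytic heart of the argument is already contained in Theorem \ref{2T3}, and the proof is essentially a repackaging of that result applied functional-by-functional. The only points requiring genuine care are the density identity $d|\langle\nu,x^\prime\rangle|=|h_{x^\prime}|\,dm_G$ that lets me rewrite $\|fh_{x^\prime}\|_1$ as an $L^1(|\langle\nu,x^\prime\rangle|)$-norm, and the check that $x^\prime\mapsto h_{x^\prime}$ is linear modulo $m_G$-null sets so that the resulting operator is well defined on equivalence classes; the translation invariance of $\nu$ enters only implicitly, through its role in securing the embedding $L^\Phi_w(\nu)\hookrightarrow L^1(G)$ and in Theorem \ref{2T3} itself.
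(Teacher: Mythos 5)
Your proposal is correct, but it is organized differently from the paper's own proof. The paper works directly with the representation $f\mathbf{*}_{\nu}g(x^\prime)=\int_G f(s)\tau_s g\,d\langle\nu,x^\prime\rangle(s)$ noted after Definition \ref{conv2}: it estimates this vector-valued integral using $\|\tau_s g\|_{\nu,\Phi}=\|g\|_{\nu,\Phi}$ (Theorem \ref{2T1}) to get $\|f\mathbf{*}_{\nu}g(x^\prime)\|_{\nu,\Phi}\leq\int_G|f(s)|\,\|g\|_{\nu,\Phi}\,d|\langle\nu,x^\prime\rangle|(s)\leq\|f\|_\nu\|g\|_{\nu,\Phi}$, and then disposes of the ``in particular'' clause by a density-of-simple-functions argument. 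You instead factor through the Radon--Nikodym density: you apply Theorem \ref{2T3} to $fh_{x^\prime}\in L^1(G)$ and then convert $\|fh_{x^\prime}\|_1$ into $\|f\|_{L^1(|\langle\nu,x^\prime\rangle|)}\leq\|f\|_\nu$ via the identity $d|\langle\nu,x^\prime\rangle|=|h_{x^\prime}|\,dm_G$. At bottom the two arguments are the same Minkowski-type estimate (the proof of Theorem \ref{2T3} is exactly the paper's computation with $m_G$ in place of $\langle\nu,x^\prime\rangle$), but your packaging buys two things: the ``in particular'' statement becomes immediate from the second clause of Theorem \ref{2T3} applied to each $x^\prime$ separately, with no density argument, and in fact your argument establishes it under the weaker hypothesis $f\in L^1_w(\nu)$, $g\in L^\Phi(\nu)$; you also make explicit the linearity of $x^\prime\mapsto h_{x^\prime}$ (via uniqueness of densities), a point the paper leaves implicit. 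The price is a bit of extra measure-theoretic bookkeeping --- the total-variation density identity and the a.e.\ linearity check --- which the paper avoids by never leaving the measure $\langle\nu,x^\prime\rangle$.
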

\begin{proof}
Let $x^\prime\in B_{X^\prime}.$ Using Theorem \ref{2T1}, we have,
\begin{align*}
\|f\mathbf{*}_{\nu}g(x^\prime)\|_{\nu,\Phi}=&\left\|\int_Gf(s)\tau_sg\,d\langle
\nu,x^\prime\rangle(s)\right\|_{\nu,\Phi}
\\\leq&\int_G|f(s)|\|\tau_sg\|_{\nu,\Phi}\,d|\langle
\nu,x^\prime\rangle|(s)\\\leq&\int_G|f(s)|\|g\|_{\nu,\Phi}\,d|\langle
\nu,x^\prime\rangle|(s)\leq\|f\|_\nu\|g\|_{\nu,\Phi}.
\end{align*}
Thus, it follows that, $f\mathbf{*}_{\nu}g\in \mathcal{B}(X^\prime, L^\Phi_w(\nu))$ with $\|f\mathbf{*}_{\nu}g\|_{\mathcal{B}(X^\prime, L^\Phi_w(\nu))}\leq\|f\|_\nu\|g\|_{\nu,\Phi}.$ 

The last statement follows again from the density of the space of simple functions.
\end{proof}
Using Theorem \ref{convo2ineq} and \cite[Proposition 4.2]{FF} we have the following immediate consequence.
\begin{cor}
Let $\nu\in M_{ac}(G,X)$ be a norm integral translation invariant vector measure. If $f,g\in L^\Phi_w(\nu)$ then $f\mathbf{*}_{\nu}g\in \mathcal{B}(X^\prime, L^\Phi_w(\nu))$ and $\|f\mathbf{*}_{\nu}g\|_{\mathcal{B}(X^\prime, L^\Phi_w(\nu))}\leq2\|\chi_G\|_{\nu,\Psi}\|f\|_{\nu,\Phi}\|g\|_{\nu,\Phi}.$ In particular, if $f,g\in L^\Phi(\nu)$ then $f\mathbf{*}_{\nu}g\in \mathcal{B}(X^\prime, L^\Phi(\nu)).$
\end{cor}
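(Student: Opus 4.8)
The plan is to deduce the corollary directly from Theorem~\ref{convo2ineq}, combined with the embedding $L^\Phi_w(\nu)\hookrightarrow L^1_w(\nu)$ and the norm comparison of \cite[Proposition 4.2]{FF}. First I would observe that, since $L^\Phi_w(\nu)$ is continuously embedded in $L^1_w(\nu)$ (recorded in Section 2), every $f\in L^\Phi_w(\nu)$ in particular lies in $L^1_w(\nu)$. Hence the hypotheses of Theorem~\ref{convo2ineq} are met by this $f$ together with $g\in L^\Phi_w(\nu)$, and that theorem immediately yields $f\mathbf{*}_\nu g\in\mathcal{B}(X^\prime,L^\Phi_w(\nu))$ together with
\[
\|f\mathbf{*}_\nu g\|_{\mathcal{B}(X^\prime,L^\Phi_w(\nu))}\leq\|f\|_\nu\|g\|_{\nu,\Phi}.
\]

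The second step is to replace the $L^1_w(\nu)$-norm $\|f\|_\nu$ by the Orlicz norm $\|f\|_{\nu,\Phi}$. Applying \cite[Proposition 4.2]{FF} to $f$ gives $\|f\|_\nu\leq 2\|\chi_G\|_{\nu,\Psi}\|f\|_{\nu,\Phi}$, and substituting this into the displayed inequality produces the claimed bound
\[
\|f\mathbf{*}_\nu g\|_{\mathcal{B}(X^\prime,L^\Phi_w(\nu))}\leq 2\|\chi_G\|_{\nu,\Psi}\|f\|_{\nu,\Phi}\|g\|_{\nu,\Phi}.
\]

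For the final assertion, I would use the analogous embedding $L^\Phi(\nu)\hookrightarrow L^1(\nu)$: if $f,g\in L^\Phi(\nu)$, then $f\in L^1(\nu)$ while $g\in L^\Phi(\nu)$, so the ``in particular'' clause of Theorem~\ref{convo2ineq} applies and gives $f\mathbf{*}_\nu g\in\mathcal{B}(X^\prime,L^\Phi(\nu))$.

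I do not anticipate any genuine obstacle: the corollary is a short bookkeeping consequence of Theorem~\ref{convo2ineq} and the norm comparison, and the only points deserving a word of justification are that the embeddings $L^\Phi_w(\nu)\hookrightarrow L^1_w(\nu)$ and $L^\Phi(\nu)\hookrightarrow L^1(\nu)$ place $f$ in the correct domain space so that Theorem~\ref{convo2ineq} is applicable. Both embeddings are already established in Section 2, so no additional work is required.
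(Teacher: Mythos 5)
Your proposal is correct and matches the paper's approach exactly: the paper presents this corollary as an immediate consequence of Theorem~\ref{convo2ineq} together with the bound $\|f\|_\nu\leq 2\|\chi_G\|_{\nu,\Psi}\|f\|_{\nu,\Phi}$ from \cite[Proposition 4.2]{FF}, which is precisely the substitution you perform. Your explicit mention of the embeddings $L^\Phi_w(\nu)\hookrightarrow L^1_w(\nu)$ and $L^\Phi(\nu)\hookrightarrow L^1(\nu)$ to verify the hypotheses of the theorem is the same bookkeeping the paper leaves implicit.
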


\section*{Acknowledgment}
The first author would like to thank the University Grants Commission, India for providing the research grant. 


\begin{thebibliography}{aaaa}
\bibitem{CFNP} \textsc{J. M. Calabuig}, \textsc{F. Galaz-Fontes}, \textsc{E. M. Navarrete} and \textsc{E. A. S\'anchez-P\'erez}, {\it Fourier transforms and convolutions on $L^p$ of a vector measure on a compact Haussdorff abelian group}, J. Fourier Anal. Appl. 19 (2013) 312-332.
\bibitem{D} \textsc{O. Delgado}, {\it Banach function subspaces of $L^1$ of a vector measure and related Orlicz spaces}, Indag. Mathem., N.S., 15 (4) (2004) 485-495.
\bibitem{DM} \textsc{O. Delgado} and \textsc{P. Miana}, {\it Algebra structure for $L^p$ of a vector measure}, J. Math. Anal. Appl. 358(2) (2009) 355-363.
\bibitem{DU} \textsc{J. Diestel} and \textsc{J. J. Uhl}, {\it Vector Measures}, Math. Surveys, vol. 15. Amer. Math. Soc., Providence (1977).
\bibitem{FF} \textsc{I. Ferrando} and \textsc{F. Galaz-Fontes}, {\it Multiplication operators on vector measure Orlicz spaces}, Indag. Math., N.S., 20(1) (2009) 57-51.
\bibitem{F} \textsc{G. B. Folland}, {\it A Course in Abstract Harmonic Analysis}, CRC Press, Boca Raton, 1995.
\bibitem{K} \textsc{S. Kakutani}, {\it Two Fixed-point Theorems Concerning Bicompact Convex Sets}, Proc. Imp. Acad. Japan, 14 (1938) 242-245.
\bibitem{ORP}\textsc{S. Okada}, \textsc{W. J. Ricker}, and \textsc{E. A. S\'anchez P\'erez}, {\it Optimal Domain and Integral Extension of Operators acting in Function Spaces}, Oper. Theory Adv. Appl., vol. 180. Birkh\"auser, Basel (2008).
\bibitem{RR}\textsc{M. M. Rao} and \textsc{Z. D. Ren}, {\it Theory of Orlicz spaces}, Dekker, New York, 1991.
\end{thebibliography}
\end{document}